\documentclass[a4paper,12pt
]{article}
\usepackage{amsmath,amssymb,amsthm}
\usepackage{mathtools}
\numberwithin{equation}{section}
\usepackage[margin=20mm]{geometry}
\usepackage{appendix}

\newtheorem{thm}{Theorem}[section]
\newtheorem{prop}[thm]{Proposition}
\newtheorem{cor}[thm]{Corollary}
\newtheorem{exam}[thm]{Example}
\newtheorem{rem}[thm]{Remark}
\newtheorem{lem}[thm]{Lemma}
\newtheorem{defn}[thm]{Definition}
\newtheorem{assum}[thm]{Assumption}

\begin{document}

\title{Conservativeness of time changed processes and \\
Liouville property for Schr\"odinger operators\thanks{This work was 
supported by JSPS KAKENHI Grant Numbers JP23K25773, JP24K06791.}}
\author{Yuichi Shiozawa\thanks{Department of Mathematical Sciences, 
Faculty of Science and Engineering, Doshisha University, 
Kyotanabe, Kyoto, 610-0394, Japan; \texttt{yshiozaw@mail.doshisha.ac.jp}} \ and 
Masayoshi Takeda\thanks{Department of Mathematics, Faculty of Engineering Science, 
Kansai University, Suita, Osaka, 564-8680, Japan; \texttt{mtakeda@kansai-u.ac.jp}}}

\maketitle

\begin{abstract}
We establish a criterion for the Liouville property for Schr\"odinger operators 
via the conservativeness of time changed processes. 
Using this criterion, we obtain necessary and sufficient conditions for the Liouville property 
for some Schr\"odinger operators 
in terms of the decay rates of the potentials at infinity/boundary. 
\end{abstract}

\section{Introduction}
We are concerned with the Liouville property for a Schr\"odinger operator of the form $-{\cal L}+\mu$, 
where ${\cal L}$ is a Markov generator without killing term (zeroth-order term) and $\mu$ is a non-negative measure.  
Here the Liouville property means that 
any bounded solution $h$ to the equation $(-{\cal L}+\mu)h=0$ must be zero.
Our purpose in this paper is to present a tractable condition for the Liouville property 
via the conservativeness of the $\mu$-time changed process of a symmetric Markov process 
generated by ${\cal L}$.

Grigor'yan-Hansen (\cite[Theorems 1.2 and 5.2]{GH98}) obtained conditions  
for the Liouville property for Schr\"odinger operators of the form $-{\cal L}+\mu$ on a Riemannian manifold, 
where ${\cal L}$ is the Laplace-Beltrami operator.
They further obtained explicit necessary and sufficient conditions 
for the Liouville property for concrete Schr\"odinger operators on ${\mathbb R}^d$
(\cite[Sections 6.2 and 6.3]{GH98}). 
Pinsky (\cite[Theorem 1]{P08}) also established, in a probabilistic way, a necessary and sufficient condition 
for the Liouville property for Schr\"odinger operators on ${\mathbb R}^d$ with $d\ge 3$, 
where ${\cal L}$ is the Laplace operator and the potential term is written as  
$\mu=V^+{\rm d}x-V^-{\rm d}x$ for some non-negative H\"older continuous functions $V^+$ and $V^-$. 
This condition is equivalent to the conservetiveness of the time changed Brownian motion with respect to $V^+$.
We here note that the arguments in \cite{GH98, P08} use  
the locality  of the Laplace(-Beltrami) operator 
or the sample path continuity of the Brownian motion. 

The second-named author (\cite{T18,T20,T25}) formulated 
the Liouville property for Dirichlet operators generated by general regular Dirichlet forms. 
In particular, it is proved in \cite[Corollary 2.9, Theorem 2.10 and Corollary 2.21]{T25} that 
the Liouville property is equivalent to the conservativeness of the $k$-time changed process  
of the resurrected process associated with the resurrected Dirichlet form, the form obtained by removing the killing part. 
Here $k$ is the killing measure in the Beurling-Deny formula.   
This result is regarded as an extension of \cite{GH98,P08} to more general Schr\"odinger operators
having non-local operators as ${\cal L}$.

In order to verify the Liouville property, we focus on the conservativeness of $\mu$-time changed processes. 
If the measure $\mu$ has full support, 
then the generator of the $\mu$-time changed process is formally written as ${\cal L}/\mu$, 
and the energy form for ${\cal L}$ remains unchanged by the $\mu$-time change. 
Using these properties, we obtained necessary and sufficient conditions for the conservativeness of 
time changed processes of symmetric stable-like processes and censored stable processes 
(\cite{S15, SU14}). 
On the other hand, the lifetime of the $\mu$-time changed process is   
the terminal time of the positive continuous additive functional (PCAF) corresponding to $\mu$. 
In particular, the divergence property of the latter is well studied 
for a class of stochastic processes such as L\'evy processes 
(see, e.g., \cite{KS20, KMS21, SV17} and references therein).

We will establish a sufficient condition 
for the almost sure divergence property of the PCAFs associated with symmetric Hunt processes 
(Theorem \ref{thm:pcaf-div}). 
By comparison with the previous results, 
we can allow the potential measures to be singular with respect to the reference measure, 
and the underlying processes to be generated by regular Dirichlet forms 
with bounded measurable coefficients. 
Moreover, by combining Theorem \ref{thm:pcaf-div} with the result of \cite{T25} (see also Theorem {\rm \ref{thm:takeda}}), 
we present a criterion for the Liouville property 
for Schr\"odinger operators (Theorem \ref{thm:liouville}). 
Since we assume that some bounded ${\cal L}$-harmonic function must be constant 
(see Assumption \ref{assum:const} and Remark \ref{rem:0-1} for details), 
we can regard Theorem \ref{thm:liouville} as a statement 
for the preservation  of the Liouville property under the measure perturbation.  

Our crucial assumption for Theorem \ref{thm:pcaf-div} is the condition \eqref{eq:decay}, 
the decay property of the potential measure $\mu$ weighted 
by the expected Feynman-Kac functional $g^{\mu}$ in \eqref{eq:gauge}. 
It is difficult in general to check this assumption; 
however, by using the result of Mizuta \cite{M77} on Riesz potentials 
(see \cite[Remark 2.7]{HB25} concerning \cite[Theorem 2]{M77}), 
we can verify the assumption under the rotation invariance of the potential measures and the coefficients. 
We formulated the condition \eqref{eq:decay} by following Ben Chrouda-Ben Fredj \cite{BCBF18}, 
which concerns semilinear equations with the fractional Laplace operator.

We make a comment on the proof of Theorem \ref{thm:pcaf-div}. 
As will be mentioned in \eqref{eq:triviality}, the divergence of the PCAF is equivalent to the triviality of $g^{\mu}$. 
Taking this equivalence into account, in Proposition \ref{prop:g-eq}, we first characterize  the function $g^{\mu}$ as a solution to some integral equation 
by using the strong Markov property and the optional projection property for PCAFs (\cite[Theorem 7.10 (p.~320) and Theorem 20.6 (p.~350)]{RW00}).  
We then follow an approach of Ben Chrouda-Ben Fredj \cite[Section 3.3]{BCBF18} 
to deduce the triviality of $g^{\mu}$ from the integral equation and \eqref{eq:decay}.

As an application of Theorems \ref{thm:takeda} and \ref{thm:liouville} with \cite{S15, SU14}, 
we obtain necessary and sufficient conditions for the Liouville property for some Schr\"odinger operators 
in terms of the decay rates of the potentials at infinity/boundary.  
We here give two such examples related to the fractional Laplace operators.  
\begin{itemize}
\item For $\alpha\in (1,2)$, 
let ${\mathbf M}=(\{X_t\}_{t\ge 0},\{P_x\}_{x\in {\mathbb R}^d})$ be 
a (transient) censored $\alpha$-stable process on a bounded domain $D\subset {\mathbb R}^d$ 
with smooth boundary, 
and let ${\cal L}$ be the associated generator.
For $r\in {\mathbb R}$, 
we define   $w(x)=\delta_D(x)^{-r}$ with $\delta_D(x)=\inf\{|x-y| \mid y\in {\partial D}\}$. 
Then the Schr\"odinger operator $-{\cal L}+w$ satisfies the Liouville property if and only if $r\ge \alpha$ 
(see Example \ref{exam:boundary} for details).

\item For $\alpha\in (1,2)$ and $d>\alpha$, 
let ${\mathbf M}=(\{X_t\}_{t\ge 0},\{P_x\}_{x\in {\mathbb R}^d})$ be 
a symmetric $\alpha$-stable-like process on ${\mathbb R}^d$ satisfying the rotation invariance property, 
and let ${\cal L}$ be the associated  generator.  
For $R>0$, let $\delta_R$ be the surface measure on $\{x\in {\mathbb R}^d \mid |x|=R\}$.
For $p>0$, we define the measure $\nu$ on ${\mathbb R}^d$ by 
\[
\nu({\rm d}y)=\sum_{n=1}^{\infty}n^{-p}\,\delta_{n^p}({\rm d}y).
\]
Then the Schr\"odinger operator $-{\cal L}+\nu$ satisfies the Liouville property if and only if $p\le (2-\alpha)^{-1}$
(see Example \ref{exam:singular} for details).  
\end{itemize}

The rest of this paper is organized as follows. 
In Section \ref{sect:prelim}, 
we first recall the notions related to the Dirichlet form theory. 
We then obtain the probabilistic expression of the potentials in \eqref{eq:density-2}. 
In Section \ref{sect:pcaf}, we establish a sufficient condition for the divergence of PCAFs at the terminal time 
(Theorem \ref{thm:pcaf-div}). 
In Section \ref{sect:liouville}, we first recall the notions of resurrected Dirichlet forms 
and the Liouville property for Schr\"odinger operators. 
We then obtain a criterion for  the Liouville property (Theorem \ref{thm:liouville}).
Section \ref{sect:exam} is devoted to applications of Theorems \ref{thm:takeda} and \ref{thm:liouville}  
to concrete Schr\"odinger operators. 
In Appendix \ref{sect:rotation}, 
we first introduce the rotation invariance properties of  Feller processes and Dirichlet forms on ${\mathbb R}^d$. 
Using this formulation, we next show a rotation invariance property of $g^{\mu}$ (Proposition \ref{prop:rot-pcaf-1}).  
We finally establish a convergence property of the Riesz potential of the measure $\mu$ 
weighted by the function $g^{\mu}$ (Corollary \ref{cor:pot-decay}). 
This property will be utilized in Section \ref{sect:exam} in order to verify the condition \eqref{eq:decay}. 

\section{Preliminaries}\label{sect:prelim}
We first recall notions on Dirichlet forms by following \cite{CF12, FOT11}.
Let $E$ be a locally compact separable metric space, 
and let $m$ be a positive Radon measure on $E$ with full support. 
We assume that $E$ is not compact, and write $E_{\Delta}=E\cup\{\Delta\}$ for the one point compactification of $E$. 
Let ${\mathbf M}=(\Omega,\{X_t\}_{t\ge 0}, \{P_x\}_{x\in E},{\cal F}, \{{\cal F}_t\}_{t\ge 0}, \{\theta_t\}_{t\ge 0}, \zeta)$ 
be an $m$-symmetric Hunt process on $E$ 
such that $\Delta$ is a cemetery point. 
Here $\zeta=\inf\{t>0 \mid X_t\in \Delta\}$ is the lifetime and $\theta_t$ is a shift operator of sample paths; 
$X_s\circ \theta_t(\omega)=X_{s+t}(\omega)$ for $s,t\ge 0$ and $\omega\in \Omega$. 
$\{{\cal F}_t\}_{t\ge 0}$ denotes the minimal completed admissible filtration (see, e.g., \cite[p.~386]{FOT11} for definition). 
We assume that the associated Dirichlet form $({\cal E},{\cal F})$ is regular on $L^2(E;m)$. 

Let $C_0(E)$ be the set of continuous functions on $E$ with compact support. 
We know by \cite[Theorem 3.2.1]{FOT11} that 
the regular Dirichlet form $({\cal E},{\cal F})$ on $L^2(E;m)$ has the Beurling-Deny  expression as follows: 
for any $u,v\in {\cal F}\cap C_0(E)$, 
\begin{equation}\label{eq:BD}
{\cal E}(u,v)={\cal E}^{(c)}(u,v)+\iint_{E\times E\setminus {\rm diag}}(u(x)-u(y))(v(x)-v(y))\,J({\rm d}x, {\rm d}y)
+\int_E u(x)v(x)\,k({\rm d}x),
\end{equation}
where
\begin{itemize}
\item ${\cal E}^{(c)}$ is a symmetric form with the domain ${\cal F}\cap C_0(E)$ and satisfies the strong local property. 
\item $J$ is a symmetric positive Radon measure on $E\times E$ off the diagonal set 
\[
{\rm diag}=\{(x,y)\in E\times E \mid x=y\}.
\]
\item $k$ is a positive Radon measure on $E$. 
\end{itemize}
The measures $J$ and $k$ are called the jumping measure and the killing measure, respectively, associated with ${\cal E}$.

Let ${\cal B}(E)$ be the set of Borel measurable functions on $E$. 
Let ${\cal B}_b(E)$ be the set of bounded functions in ${\cal B}(E)$, 
and let  ${\cal B}^+(E)$ be the set of non-negative functions in ${\cal B}(E)$. 
We define the semigroup $\{p_t\}_{t\ge 0}$ and resolvent $\{G_{\alpha}\}_{\alpha>0}$ of ${\bf M}$, 
respectively, by
\[
p_tf(x)=E_x\left[f(X_t);t<\zeta\right], \quad t>0, \ x\in E, \ f\in {\cal B}(E)
\] 
and 
\[
G_{\alpha}f(x)=\int_0^{\infty}e^{-\alpha t}p_t f(x)\,{\rm d}t, \quad \alpha>0, \ x\in E, \ f\in {\cal B}(E)
\] 
provided that the right hand side of each equality above makes sense.
We impose the next assumption on $({\cal E},{\cal F})$ and $\{p_t\}_{t\ge 0}$. 
\begin{assum}\label{assum:p_t}\rm 
The next conditions are fulfilled.
\begin{enumerate}
\item[(i)] (No killing measure) \ 
No killing measure is present in the Beurling-Deny expression of $({\cal E},{\cal F})$, 
i.e., $k=0$.

\item[(ii)] (Transience) \
There exists $g\in L^1(E;m)\cap {\cal B}(E)$ such that 
$g>0$, $m$-a.e.\ 
and $\int_0^{\infty}p_tg\,{\rm d}t<\infty$, $m$-a.e.

\item[(iii)] (Irreducibility) \
If a Borel set $A\subset E$ is invariant with respect to $\{p_t\}_{t\ge 0}$, 
i.e., $p_t({\bf 1}_Af)={\bf 1}_A p_tf$ $m$-a.e.\ for any $f\in L^2(E;m)\cap {\cal B}_b(E)$ and $t>0$, 
then $m(A)=0$ or $m(E\setminus A)=0$.  

\item[(iv)] (Strong Feller property) \
For any $f\in {\cal B}_b(E)$ and $t>0$, $p_tf$ is a bounded and continuous function on $E$.  
\end{enumerate}
\end{assum}

By the strong Feller property, the semigroup $\{p_t\}_{t\ge 0}$ satisfies the absolute continuity condition:
\begin{itemize}
\item For each $t>0$ and $x\in E$, 
the transition function $p_t(x,\cdot)$ is absolutely continuous with respect to $m$, 
$p_t(x,{\rm d}y)=p_t(x,y)\,m({\rm d}y)$.
\end{itemize}
Then for each $\alpha\ge 0$, 
there exists a non-negative jointly measurable function $G_{\alpha}(x,y) \ (x,y\in E)$, 
which is $\alpha$-excessive in $x$ and $y$, such that $G_{\alpha}(x,y)=G_{\alpha}(y,x)$ for any $x,y\in E$ and  
\[
G_{\alpha}f(x)=E_x\left[\int_0^{\zeta}e^{-\alpha t}f(X_t)\,{\rm d}t\right]
=\int_E G_{\alpha}(x,y)f(y)\,m({\rm d}y), \quad f\in {\cal B}^+(E)
\]
(\cite[Lemma 4.2.4]{FOT11}). 
In particular, we have $G_{\alpha}(x,y)=\int_0^{\infty} e^{-\alpha t}p_t(x,y)\,{\rm d}t$. 
We simply write $G(x,y)$ for $G_0(x,y)$. 

We say that a positive Radon measure $\mu$ on $E$ is of finite energy integral ($\mu\in S_0$ in notation) if  
\[
\iint_{E\times E}G_1(x,y)\,\mu({\rm d}x)\mu({\rm d}y)<\infty.
\]
We then define  
\[
S_{00}=\left\{\mu\in S_0 \mid \mu(E)<\infty, \ \sup_{x\in {\mathbb R}^d}G_1\mu(x)<\infty\right\},
\]
where $G_{\alpha}\mu(x)=\int_E G_{\alpha}(x,y)\,\mu({\rm d}y)$ is the $\alpha$-potential of $\mu$. 
See \cite[Exercise 4.2.2]{FOT11} for the equivalence of the definitions of $S_0$ and $S_{00}$ above 
and those in \cite[(2.2.1), (2.2.10)]{FOT11}. 
A positive Borel measure $\mu$ on $E$ is said to be smooth in the strict sense ($\mu\in S_1$ in notation)
if there exists a sequence $\{B_n\}_{n\ge 1}$ of Borel sets increasing to $E$ such that 
${\bf 1}_{B_n}\cdot\mu\in S_{00}$ for each $n\ge 1$ and 
\[
P_x\left(\lim_{n\rightarrow\infty}\sigma_{E\setminus B_n}\ge \zeta\right)=1, \quad x\in E.
\]
Here for a Borel set $B\subset E$, $\sigma_B=\inf\{t>0 \mid X_t\in B\}$ is the hitting time of ${\mathbf M}$ to $B$. 
By definition, $S_{00}\subset S_1$.

We next discuss the probabilistic expression of the potential of a measure in $S_1$.
Let  ${\mathbf A}_{c,1}^+$ be the family of positive continuous additive functionals in the strict sense with respect to ${\mathbf M}$
(see \cite[p.~222 and p.~235--236 for definition]{FOT11}). 
Then there exists a one to one correspondence (the so-called Revuz correspondence) 
between $S_1$ and ${\mathbf A}_{c,1}^+$ as follows: 
if $\mu\in S_1$ and $\{A_t\}_{t\ge 0}\in {\mathbf A}_{c,1}^+$ are in the Revuz correspondence, 
then for any $\alpha\ge 0$ and $f,h\in {\cal B}^+(E)$, 
\begin{equation}\label{eq:Revuz}
\int_E E_x\left[\int_0^{\infty}e^{-\alpha t}f(X_t)\,{\rm d}A_t\right]h(x)\,m({\rm d}x)
=\int_E G_{\alpha}h(x)f(x)\,\mu({\rm d}x)
\end{equation}
(\cite[Theorem 5.1.3]{FOT11}). 
In particular, if $\mu\in S_{00}$,  then by \cite[Theorem 5.1.6]{FOT11}, 
\[
E_x\left[\int_0^{\infty}e^{-t}\,{\rm d}A_t\right]=G_1\mu(x), \quad x\in E.
\]

By \eqref{eq:Revuz}, we have 
\begin{lem}
Let $\mu\in S_1$ and $\{A_t^{\mu}\}_{t\ge 0}\in {\mathbf A}_{c,1}^+$ be in the Revuz correspondence. 
Then for any $f\in {\cal B}^+(E)$, 
\begin{equation}\label{eq:density-2}
E_x\left[\int_0^{\zeta}f(X_s)\,{\rm d}A_s^{\mu}\right]=\int_E G(x,y)f(y)\,\mu({\rm d}y), \quad x\in E.
\end{equation}
\end{lem}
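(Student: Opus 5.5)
We prove \eqref{eq:density-2} in two stages: first with the weight $e^{-\alpha s}$ for $\alpha>0$, then let $\alpha\downarrow 0$. Since $A^\mu$ does not charge $[\zeta,\infty)$, the left-hand side equals $E_x[\int_0^\infty f(X_s)\,{\rm d}A^\mu_s]$. We may reduce to bounded $f$ supported in a set $B_n$ from the definition of $S_1$ and pass to the limit by monotone convergence at the end; replacing $\mu$ by ${\bf 1}_{B_n}\cdot\mu$ corresponds to replacing $A^\mu$ by $\int_0^t {\bf 1}_{B_n}(X_s)\,{\rm d}A^\mu_s$.

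\emph{Step 1 (identify the $\alpha$-potential $m$-a.e.).} Fix $\alpha>0$ and set
\[
U_\alpha f(x)=E_x\left[\int_0^\infty e^{-\alpha s}f(X_s)\,{\rm d}A^\mu_s\right],\qquad R_\alpha f(x)=\int_E G_\alpha(x,y)f(y)\,\mu({\rm d}y).
\]
By the symmetry $G_\alpha(x,y)=G_\alpha(y,x)$ and Fubini, the right-hand side of \eqref{eq:Revuz} equals $\int_E R_\alpha f(x)\,h(x)\,m({\rm d}x)$. Hence \eqref{eq:Revuz} states that $\int U_\alpha f\,h\,{\rm d}m=\int R_\alpha f\,h\,{\rm d}m$ for all $h\in{\cal B}^+(E)$. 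Choosing $h={\bf 1}_K$ for compact $K$, or $h=g{\bf 1}_{\{U_\alpha f>R_\alpha f\}}$ with the $g$ of Assumption~\ref{assum:p_t}(ii), gives $U_\alpha f=R_\alpha f$ $m$-a.e. Some care is needed where either side is infinite. This is handled by the reduction to $\mu\in S_{00}$ and bounded $f$, since then $R_\alpha f\le \|f\|_\infty G_1\mu$ is bounded for $\alpha\ge1$, and the resolvent equation covers the remaining $\alpha$.

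\emph{Step 2 (upgrade to every $x$).} This is the main point. Both functions are $\alpha$-excessive. $U_\alpha f$ is $\alpha$-excessive by the additivity of $A^\mu$ and the Markov property: $e^{-\alpha t}p_tU_\alpha f(x)=E_x[\int_t^\infty e^{-\alpha s}f(X_s)\,{\rm d}A^\mu_s]\uparrow U_\alpha f(x)$ as $t\downarrow 0$. $R_\alpha f$ is $\alpha$-excessive because it is a $\mu$-mixture of the $\alpha$-excessive functions $G_\alpha(\cdot,y)$. Under the absolute continuity condition, which follows from the strong Feller property, $p_t(x,\cdot)\ll m$. Therefore two $\alpha$-excessive functions agreeing $m$-a.e. satisfy $e^{-\alpha t}p_tU_\alpha f(x)=e^{-\alpha t}p_tR_\alpha f(x)$ for every $x$ and $t>0$, and letting $t\downarrow0$ gives equality everywhere. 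Here we use the strict-sense PCAF, for which the identity holds for every starting point with no exceptional set.

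\emph{Step 3 ($\alpha\downarrow0$).} As $\alpha\downarrow 0$, $e^{-\alpha s}\uparrow 1$, so $U_\alpha f(x)\uparrow E_x[\int_0^\zeta f(X_s)\,{\rm d}A^\mu_s]$. Also $G_\alpha(x,y)=\int_0^\infty e^{-\alpha t}p_t(x,y)\,{\rm d}t\uparrow G(x,y)$, so $R_\alpha f(x)\uparrow\int_E G(x,y)f(y)\,\mu({\rm d}y)$. Both limits follow by monotone convergence, and the equality persists even when the common value is $+\infty$. Finally, remove the truncation by $B_n$ using monotone convergence in $n$. On the left, $\int_0^\zeta {\bf 1}_{B_n}f(X_s)\,{\rm d}A^\mu_s\uparrow\int_0^\zeta f(X_s)\,{\rm d}A^\mu_s$. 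On the right, $\mu(E\setminus\bigcup_n B_n)=0$ because the $B_n$ increase to $E$. This proves \eqref{eq:density-2}. The expected obstacle is Step 2, namely passing from the $m$-a.e. identity to every $x\in E$, which relies on the absolute continuity condition together with the excessiveness of both sides.
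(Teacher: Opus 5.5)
Your proof is correct and is essentially the paper's argument: the Revuz formula \eqref{eq:Revuz} gives the identity $m$-a.e., and since $p_t(x,\cdot)\ll m$, integrating against $p_t(x,\cdot)$ and letting $t\downarrow0$ upgrades it to every $x$, because both sides are excessive. The paper runs this directly at $\alpha=0$, which \eqref{eq:Revuz} permits. Your detour through $\alpha>0$ and your truncation to $S_{00}$ with bounded $f$ are harmless but unnecessary: if $u,v$ are $[0,\infty]$-valued and $\int u\,h\,{\rm d}m=\int v\,h\,{\rm d}m$ for all $h\in{\cal B}^+(E)$, then $u=v$ $m$-a.e. already, since $m$ is Radon and hence $\sigma$-finite.
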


\begin{proof}
Let $f\in {\cal B}^+(E)$. 
Then by \eqref{eq:Revuz}, 
\begin{equation}\label{eq:density}
E_x\left[\int_0^{\zeta}f(X_s)\,{\rm d}A_s^{\mu}\right]=\int_E G(x,y)f(y)\,\mu({\rm d}y), \quad \text{$m$-a.e.\ $x\in E$}.
\end{equation}
Hence for any $x\in E$ and $t>0$, we have by the Fubini theorem,
\begin{equation}\label{eq:density-1}
\begin{split}
\int_E p_t(x,z)E_z\left[\int_0^{\zeta}f(X_s)\,{\rm d}A_s^{\mu}\right]\,m({\rm d}z)
&=\int_E p_t(x,z) \left(\int_E G(z,y)f(y)\,\mu({\rm d}y)\right)\,m({\rm d}z)\\
&=\int_E \left(\int_E p_t(x,z) G(z,y)\, m({\rm d}z) \right)f(y)\,\mu({\rm d}y).
\end{split}
\end{equation}

By the Markov property, we have  for any $x\in E$,
\begin{equation*}
\begin{split}
&\int_E p_t(x,z)E_z\left[\int_0^{\zeta}f(X_s)\,{\rm d}A_s^{\mu}\right]\,m({\rm d}z)
=E_x\left[E_{X_t}\left[\int_0^{\zeta}f(X_s)\,{\rm d}A_s^{\mu}\right];t<\zeta\right]\\
&=E_x\left[\int_t^{\zeta}f(X_s)\,{\rm d}A_s^{\mu};t<\zeta\right]
\rightarrow E_x\left[\int_0^{\zeta}f(X_s)\,{\rm d}A_s^{\mu}\right] \quad (t\rightarrow +0).
\end{split}
\end{equation*}
On the other hand, 
since $G(x,y)=\int_0^{\infty}p_t(x,y)\,{\rm d}t$  and the semigroup property of $\{p_t\}_{t\ge 0}$ yields 
\[
\int_E p_t(x,z)p_s(z,y)\,m({\rm d}z)=p_{t+s}(x,y),
\]
the Fubini theorem implies that for any $x,y\in E$, 
\[
\int_E p_t(x,z)G(z,y)\,m({\rm d}z)=\int_t^{\infty}p_s(x,y)\,{\rm d}s \rightarrow G(x,y) \quad (t\rightarrow +0).
\]
Therefore, by letting $t\rightarrow +0$ in \eqref{eq:density-1}, 
we obtain \eqref{eq:density-2}. 
\end{proof}

\section{Divergence of positive continuous additive functionals}\label{sect:pcaf}
We keep the same notations and setting as in the previous section. 
In this section, we follow the argument of \cite[Section 3.2]{BCBF18}  
to show a sufficient condition for the almost sure divergence of positive continuous additive functionals 
associated with smooth measures in the strict sense (Theorem \ref{thm:pcaf-div}). 

Let $\mu\in S_1$,  
and let $\{A_t^{\mu}\}_{t\ge 0}\in {\mathbf A}_{c,1}^+$ be in the Revuz correspondence to $\mu$.
Define the function $g^{\mu}$ on $E$ by 
\begin{equation}\label{eq:gauge}
g^{\mu}(x)=E_x\left[e^{-A_{\zeta}^{\mu}}\right], \quad x\in E,
\end{equation}
which is a probabilistic expression of the Liouville function in \cite[p.~668]{GH98}. 
Noting that  
\begin{equation}\label{eq:triviality}
P_x(A_{\zeta}^{\mu}=\infty)=1 \quad (x\in E) \iff g^{\mu}(x)=0 \quad (x\in E),
\end{equation}
we characterize $g^{\mu}$ as a solution to some equation:

\begin{prop}\label{prop:g-eq}
For any $x\in E$, 
\[
g^{\mu}(x)+\int_E G(x,y)g^{\mu}(y)\,\mu({\rm d}y)=P_x(A_{\zeta}^{\mu}<\infty).
\]
\end{prop}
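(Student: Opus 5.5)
The identity comes from the pathwise formula
\[
1-e^{-A_\zeta^\mu}=\int_0^{\zeta}e^{-(A_\zeta^\mu-A_s^\mu)}\,{\rm d}A_s^\mu
\quad\text{on }\{A_\zeta^\mu<\infty\}.
\]
We take expectations and identify the integrand through the strong Markov property and the optional projection of PCAFs. Recall first that on $\{A_\zeta^\mu=\infty\}$ we have $e^{-A_\zeta^\mu}=0$. Hence $g^\mu(x)=E_x[e^{-A_\zeta^\mu};A_\zeta^\mu<\infty]$, and
\[
P_x(A_\zeta^\mu<\infty)-g^\mu(x)=E_x\bigl[1-e^{-A_\zeta^\mu};A_\zeta^\mu<\infty\bigr].
\]
On $\{A_\zeta^\mu<\infty\}$ the map $s\mapsto A_s^\mu$ is continuous and finite, so the fundamental theorem of calculus for the continuous increasing function $s\mapsto e^{A_s^\mu}$ gives the formula above. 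On $\{A_\zeta^\mu=\infty\}$ we instead define the integrand $e^{-(A_\zeta^\mu-A_s^\mu)}$ to be $0$ wherever $A_s^\mu<\infty$. Then in all cases
\[
E_x\bigl[1-e^{-A_\zeta^\mu};A_\zeta^\mu<\infty\bigr]=E_x\left[\int_0^\zeta e^{-(A_\zeta^\mu-A_s^\mu)}\,{\rm d}A_s^\mu\right],
\]
since the integrand vanishes off $\{A_\zeta^\mu<\infty\}$. One should check that $A_s^\mu<\infty$ for $s<\zeta$, which holds because $A^\mu$ is a PCAF in the strict sense.

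Next we use additivity. For $s<\zeta$ we have $A_\zeta^\mu-A_s^\mu=A_\zeta^\mu\circ\theta_s$, because $\zeta=s+\zeta\circ\theta_s$. So the integrand is $Y_s:=e^{-A_\zeta^\mu}\circ\theta_s=F(\theta_s\omega)$, where $F=e^{-A_\zeta^\mu}$ is a bounded ${\cal F}$-measurable random variable. The key step is to replace $Y_s$ by its optional projection under ${\rm d}A_s^\mu$. By the strong Markov property, for every stopping time $T$ we get $E_x[F\circ\theta_T\mid{\cal F}_T]=E_{X_T}[F]=g^\mu(X_T)$ on $\{T<\zeta\}$. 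So the optional projection of $(Y_s)$ is $g^\mu(X_s){\bf 1}_{\{s<\zeta\}}$, provided $g^\mu(X_s)$ is optional. This holds because $g^\mu$ is excessive, hence finely continuous, and so $g^\mu(X_s)$ is right-continuous. Since $A^\mu$ is continuous and adapted, it is optional, and the projection theorem for optional increasing processes (\cite[Theorem 20.6]{RW00}, together with \cite[Theorem 7.10]{RW00}) then yields
\[
E_x\left[\int_0^\zeta Y_s\,{\rm d}A_s^\mu\right]=E_x\left[\int_0^\zeta g^\mu(X_s)\,{\rm d}A_s^\mu\right].
\]

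Finally, the lemma giving \eqref{eq:density-2}, applied with $f=g^\mu\in{\cal B}^+(E)$, identifies the right-hand side as $\int_E G(x,y)g^\mu(y)\,\mu({\rm d}y)$ for every $x\in E$. Combining the three displays gives the claim.

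The main obstacle is the projection step. The difficulty is that $Y_s$ is a $\theta_s$-shifted functional, not an adapted process, so one must make sure it is a measurable process to which the optional projection theorem applies. If necessary, I would first reduce to $F$ of product form $\prod_i f_i(X_{t_i})$, using a monotone class argument and then approximating $e^{-A_\zeta^\mu}$. Alternatively, one can avoid general theory: discretize $\int_0^\zeta Y_s\,{\rm d}A_s^\mu$ by Riemann sums $\sum_k Y_{t_k}(A_{t_{k+1}}-A_{t_k})$, condition on ${\cal F}_{t_{k+1}}$ via the Markov property, and pass to the limit by bounded convergence, which uses the right-continuity of $s\mapsto Y_s$. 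A truncation such as ${\bf 1}_{\{A_t<n\}}$ handles integrability, and monotone convergence then removes it.
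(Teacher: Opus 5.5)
Your proposal is correct and is essentially the paper's proof read in the opposite direction. The paper likewise applies \eqref{eq:density-2} with $f=g^{\mu}$, then uses the Markov property and the optional projection theorem to trade $g^{\mu}(X_t){\bf 1}_{\{t<\zeta\}}$ for $e^{-A_{\zeta}^{\mu}\circ\theta_t}{\bf 1}_{\{t<\zeta\}}$ under ${\rm d}A_t^{\mu}$, and concludes via $A_\zeta^{\mu}=A_t^{\mu}+A_\zeta^{\mu}\circ\theta_t$ on $\{t<\zeta\}$ and $e^{-A^\mu_\zeta}\int_0^\zeta e^{A_t^\mu}\,{\rm d}A_t^\mu=1-e^{-A^\mu_\zeta}$ on $\{A^\mu_\zeta<\infty\}$. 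One minor correction: $g^{\mu}$ is excessive for the killed process ${\mathbf M}^{\mu}$, not for ${\mathbf M}$ (for conservative ${\mathbf M}$ one has $p_tg^{\mu}\ge g^{\mu}$; it is $1-g^{\mu}$ that is ${\mathbf M}$-excessive), but this still gives the fine continuity you want, and optionality of $g^{\mu}(X_s){\bf 1}_{\{s<\zeta\}}$ only needs $g^{\mu}$ measurable and $X$ right-continuous and adapted.
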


\begin{proof}
We see from \eqref{eq:density-2} and \cite[Theorem 7.10 (p.~320) and Theorem 20.6 (p.~350)]{RW00} that
\begin{equation}\label{eq:optional}
\begin{split}
\int_E G(x,y)g^{\mu}(y)\,\mu({\rm d}y)
=E_{x}\left[\int_0^\zeta g^\mu(X_t)\,{\rm d}A_t^{\mu}\right]
&=E_{x}\left[\int_0^{\infty}1_{\{t<\zeta\}}E_{X_t}\left[e^{-A_\zeta^{\mu}}\right]\,{\rm d}A_t^{\mu}\right]\\
&=E_{x}\left[\int_0^{\infty}E_x\left[1_{\{t<\zeta\}}e^{-A_\zeta^{\mu}\circ\theta_t}|{\cal F}_t\right]\,{\rm d}A_t^{\mu}\right].
\end{split}
\end{equation}
Since 
\[
A_\zeta^{\mu}=A^\mu_t+A_\zeta^{\mu}\circ \theta_t\ \ \text{on}\ \{t<\zeta\},
\]
the last expression of \eqref{eq:optional} equals 
\begin{equation*}
\begin{split}
E_x\left[e^{-A^\mu_\zeta}\int_0^\zeta e^{A_t^{\mu}}\,{\rm d}A_t^{\mu};A^\mu_\zeta<\infty\right]
&=E_x\left[1-e^{-A_\zeta^{\mu}};A^\mu_\zeta<\infty\right]\\
&=P_x(A^\mu_\zeta<\infty)-E_x\left[e^{-A_\zeta^{\mu}}\right].
\end{split}
\end{equation*}
The proof is complete.
\end{proof}

To establish our result below, we need to impose the next condition on $A_t^{\mu}$.
\begin{assum}\label{assum:const}
${\mathbf M}$ is conservative and the function $h^{\mu}(x)=P_x(A_{\infty}^{\mu}<\infty) \ (x\in E)$ is constant. 
\end{assum}

\begin{rem}\label{rem:0-1}\rm 
If ${\mathbf M}$ is conservative,  then by the Markov property, 
\begin{equation*}
\begin{split}
E_x\left[h^{\mu}(X_t)\right]
&=E_x\left[P_{X_t}\left(A_{\infty}^{\mu}<\infty \right)\right]
=E_x\left[P_x\left(A_{\infty}^{\mu}\circ \theta_t<\infty \mid {\cal F}_t\right)\right]\\
&=P_x(A_{\infty}^{\mu}\circ\theta_t<\infty)=P_x(A_{\infty}^{\mu}<\infty)=h^{\mu}(x), \quad x\in E.
\end{split}
\end{equation*}
Namely, $p_th^{\mu}=h^{\mu}$ for any $t\ge 0$ and so $h^{\mu}$ is continuous by the strong Feller property. 
\begin{enumerate}
\item[(i)]
Let ${\mathbf M}$ be a symmetric L\'evy process on ${\mathbb R}^d$. 
Then ${\mathbf M}$ is conservative and the associated Dirichlet form is regular on $L^2({\mathbb R}^d;{\rm d}x)$ (\cite[Examples 1.4.1 and 4.1.1]{FOT11}).
If ${\mathbf M}$ satisfies the strong Feller property, 
and if the density of the transition function $q_t(x) \ (x\in {\mathbb R}^d)$ is bounded and continuous for some $t>0$, 
then $h^{\mu}$ is constant by \cite[Proposition 3]{K21}. 
Therefore, Assumption \ref{assum:const} is fulfilled. 
\item[(ii)] Assume that ${\mathbf M}$ is conservative. 
Since $P_x(A_t^{\mu}<\infty, \ t\ge 0)=1$, 
we know that 
\[
\{A_{\infty}^{\mu}<\infty\}=\{A_{\infty}^{\mu}\circ\theta_t<\infty\}, \quad \text{$P_x$-a.s., \ $t\ge 0$}.
\]
Hence the event $\{A_{\infty}^{\mu}<\infty\}$ belongs to the tail $\sigma$-field 
\[
{\cal T}=\bigcap_{t>0}\sigma\left(X_s, s\ge t\right).
\]
In particular,  if the tail event is trivial, 
then $P_x(A_{\infty}^{\mu}<\infty)=1$ for any $x\in E$, or $P_x(A_{\infty}^{\mu}<\infty)=0$ for any $x\in E$, 
whence Assumption \ref{assum:const} is fulfilled. 
Such a tail triviality is established for symmetric diffusion processes on ${\mathbb R}^d$ 
generated by regular Dirichlet forms \eqref{eq:form-diffusion} below
(\cite[Proposition 2.3]{BK00}), 
and for symmetric stable-like processes on ${\mathbb R}^d$ 
generated by regular Dirichlet forms \eqref{eq:form-stable-like} below (\cite[Theorem 2.10]{KKW17}). 
\end{enumerate}
\end{rem}

Using Proposition \ref{prop:g-eq}, we  prove
\begin{thm}\label{thm:pcaf-div}
Assume that Assumptions {\rm \ref{assum:p_t}} and {\rm \ref{assum:const}} are satisfied, and 
\begin{equation}\label{eq:decay}
\lim_{x\rightarrow \Delta}\int_E G(x,y)g^{\mu}(y)\,\mu({\rm d}y)=0.
\end{equation}
If there exists $x_0\in E$ such that $\int_E G(x_0,y)\,\mu({\rm d}y)=\infty$ 
and $\int_K G(x_0,y)\,\mu({\rm d}y)<\infty$ for any compact set $K$ in $E$,
then $P_x(A_{\infty}^{\mu}=\infty)=1$ for any $x\in E$.
\end{thm}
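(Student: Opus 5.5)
Since $\mathbf M$ is conservative, $\zeta=\infty$ and Proposition \ref{prop:g-eq} becomes
\[
g^{\mu}(x)+\int_E G(x,y)g^{\mu}(y)\,\mu({\rm d}y)=h^{\mu}(x)\equiv c,\qquad x\in E,
\]
where $c\in[0,1]$ is the constant from Assumption \ref{assum:const}. By \eqref{eq:triviality} it suffices to show $g^{\mu}\equiv 0$, which follows once we know $c=0$: then $0\le g^{\mu}\le h^{\mu}=0$. So I argue by contradiction and assume $c>0$.

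\textbf{Step 1: $g^{\mu}$ tends to $c$ at infinity.} Letting $x\to\Delta$ in the identity above and using \eqref{eq:decay}, we get $g^{\mu}(x)\to c$ as $x\to\Delta$. Hence there is a compact set $K\subset E$ with $g^{\mu}\ge c/2$ on $E\setminus K$.

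\textbf{Step 2: contradiction at $x_0$.} The identity at $x=x_0$ gives
\[
\frac{c}{2}\int_{E\setminus K}G(x_0,y)\,\mu({\rm d}y)\le \int_E G(x_0,y)g^{\mu}(y)\,\mu({\rm d}y)\le c<\infty .
\]
On the other hand, $\int_E G(x_0,y)\,\mu({\rm d}y)=\infty$ by hypothesis, while $\int_K G(x_0,y)\,\mu({\rm d}y)<\infty$. Therefore $\int_{E\setminus K}G(x_0,y)\,\mu({\rm d}y)=\infty$, contradicting the display above. Thus $c=0$, so $g^{\mu}\equiv0$ and $P_x(A_\infty^{\mu}=\infty)=1$ for all $x$.

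The delicate point is Step 1. Proposition \ref{prop:g-eq} holds pointwise for every $x$, which \eqref{eq:density-2} guarantees, so there is no $m$-null set issue. However, we need $g^{\mu}\ge c/2$ outside a compact set, and this requires the limit in \eqref{eq:decay} to be taken in the sense of the one-point compactification, i.e.\ uniformly outside compacts. That is the meaning of $x\to\Delta$, so no continuity of $g^{\mu}$ is needed. The transience, irreducibility and strong Feller parts of Assumption \ref{assum:p_t} enter only through the existence and symmetry of $G(x,y)$, the pointwise formula \eqref{eq:density-2}, and the constancy of $h^{\mu}$.
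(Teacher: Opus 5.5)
Your proof is correct and follows the paper's argument essentially verbatim. Both use the identity from Proposition \ref{prop:g-eq} with the constant $c=h^{\mu}$, the limit $g^{\mu}(x)\to c$ as $x\to\Delta$ from \eqref{eq:decay}, and the bound $c\ge \frac{c}{2}\int_{E\setminus K}G(x_0,y)\,\mu({\rm d}y)=\infty$ to force $c=0$. The final detour through $g^{\mu}\equiv 0$ is unnecessary, since $c=0$ already says $P_x(A_\infty^{\mu}<\infty)=0$ for every $x\in E$.
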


\begin{proof}
Let $h^{\mu}(x)=P_x(A_{\infty}^{\mu}<\infty)$. 
Since $h^{\mu}$ is constant by Assumption \ref{assum:const},  
we can set $h^{\mu}(x)=c \ (x\in E)$ for some $c\in [0,1]$. 
Then by Proposition \ref{prop:g-eq}, 
\[
g^{\mu}(x)+\int_E G(x,y)g^{\mu}(y)\,\mu({\rm d}y)=c, \quad x\in E. 
\]
Letting $x\rightarrow \Delta$, we have by \eqref{eq:decay},
\[
\lim_{x\rightarrow \Delta}g^{\mu}(x)=c. 
\]
Therefore,  there exists a compact set $K$ in $E$ such that for any $y\in E\setminus K$, 
$g^{\mu}(y)\ge c/2$,
which implies that 
\[
c=g^{\mu}(x_0)+\int_E G(x_0,y)g^{\mu}(y)\,\mu({\rm d}y)
\ge \int_{E\setminus K} G(x_0,y)g^{\mu}(y)\,\mu({\rm d}y)
\ge \frac{c}{2}\int_{E\setminus K} G(x_0,y)\,\mu({\rm d}y).
\]
Since $\int_{E\setminus K} G(x_0,y)\,\mu({\rm d}y)=\infty$ by assumption, 
we have $c=0$ and so the proof is complete.
\end{proof}

\section{Resurrected Dirichlet forms and Liouville property for Schr\"odinger operators}\label{sect:liouville}
In this section, we apply Theorem \ref{thm:pcaf-div} to the Liouville property for Schr\"odinger operators. 
Let $C_0(E)$ be the set of continuous functions on $E$ with compact support. 
Let $({\cal E},{\cal F})$ be a regular Dirichlet form on $L^2(E;m)$. 
Since each $u\in {\cal F}$ admits its quasi continuous $m$-version (\cite[Theorem 2.1.3]{FOT11}),  
we may and do assume that $u\in {\cal F}$ is already quasi continuous.

Following \cite{T25}, we first introduce the notion of solutions to the Laplace type equation associated with $({\cal E},{\cal F})$.  
Let ${\cal F}_{{\rm loc}}$ be the set of functions $u$ on $E$ such that, 
for any relatively compact open set $G$ in $E$, 
there exists $u_G\in {\cal F}$ such that $u=u_G$, $m$-a.e.\ on $G$. 
Then for the jumping measure $J({\rm d}x,{\rm d}y)$ in \eqref{eq:BD} and for any $u\in {\cal F}_{{\rm loc}}$, 
we define the Borel measure $\mu_{\langle u\rangle}^{j}$ on $E$ by 
\[
\mu_{\langle u\rangle}^{j}(B)=\iint_{B\times E\setminus {\rm diag}}(u(x)-u(y))^2\,J({\rm d}x, {\rm d}y), \quad B\in {\cal B}(E).
\]
We further define a subspace ${\cal F}_{{\rm loc}}^{\dagger}$ of ${\cal F}_{{\rm loc}}$ by following  \cite{FKW14, K10}:
\[
{\cal F}_{{\rm loc}}^{\dagger}=\{u\in {\cal F}_{{\rm loc}} \mid \text{$\mu_{\langle u\rangle}^j$ is a Radon measure on $E$}\}.
\]
Let  ${\cal L}$ be the generator associated with $({\cal E},{\cal F})$. 
We say that a function $h\in {\cal F}_{{\rm loc}}^{\dagger}\cap L^{\infty}(E;m)$ is a solution to ${\cal L}u=0$ if 
${\cal E}(h,\varphi)=0$ for any functions $\varphi\in {\cal F}\cap C_0(E)$. We write ${\cal S}({\cal E})$ for the set of
bounded solutions. 
We say that the operator ${\cal L}$ satisfies the {\it Liouville property} if $h=0$ q.e.\ for any 
$h\in{\cal S}({\cal E})$, where q.e.\ is an abbreviation for quasi-everywhere (see \cite[Section 2.1]{FOT11} for definition).

We next discuss the resurrection of $({\cal E},{\cal F})$. 
Let $({\cal F}_e,{\cal E})$ be the extended Dirichlet space of $({\cal F},{\cal E})$ (see \cite[p.~41]{FOT11} for definition). 
Then by \cite[Theorem 2.1.7]{FOT11}, any $u\in {\cal F}_e$ admits a quasi-continuous modification, which is still expressed as $u$. 
In particular, the energy form ${\cal E}$ on ${\cal F}_e\times {\cal F}_e$ 
also admits a unique Beurling-Deny representation \eqref{eq:BD} 
(\cite[Theorem 4.3.3 (i)]{CF12}).
For $u,v\in {\cal F}_e$, we define 
\[
{\cal E}^{\rm res}(u,v)={\cal E}^{(c)}(u,v)+\iint_{E\times E\setminus {\rm diag}}(u(x)-u(y))(v(x)-v(y))\,J({\rm d}x, {\rm d}y)
\]
and $m_k=m+k$.
Then by \cite[(5.2.24)]{CF12}, $({\cal E}^{\rm res},{\cal F})$ is also a regular Dirichlet form on $L^2(E;m_k)$.
Moreover, if  $({\cal F}_e^{\rm res}, {\cal E}^{\rm res})$ denotes 
the extended Dirichlet space of $({\cal F}, {\cal E}^{\rm res})$, 
then ${\cal F}_e={\cal F}_e^{\rm res}\cap L^2(E;k) \subset {\cal F}_e^{\rm res}$ (\cite[p.~188]{CF12}). 
We also define 
\[
{\cal F}^{\rm res}={\cal F}_e^{\rm res}\cap L^2(E;m).
\]
Then by \cite[Theorem 5.2.17]{CF12}, 
$({\cal E}^{\rm res}, {\cal F}^{\rm res})$ is a regular Dirichlet form on $L^2(E;m)$, 
which is called the resurrected Dirichlet form of $({\cal E},{\cal F})$.
In particular, ${\cal F}\cap C_0(E)$ is a special standard core of $({\cal E}^{\rm res}, {\cal F}^{\rm res})$. 
Therefore, if we define ${\cal E}_1^{{\rm res}}(u,u)={\cal E}^{{\rm res}}(u,u)+\int_E u^2\,{\rm d}m$ for $u\in {\cal F}$, 
then ${\cal F}^{\rm res}$ is a $\sqrt{{\cal E}_1^{{\rm res}}}$-closure of ${\cal F}\cap C_0(E)$.

Let us apply the argument above to perturbed Dirichlet forms. 
For the rest of this section, we assume that $({\cal E},{\cal F})$ has no killing measure, that is, $k=0$. 
Let $\mu$ be a positive Radon measure on $E$ charging no set of zero capacity.
Let $({\cal E}^{\mu},{\cal F}^{\mu})$ be a perturbed Dirichlet form on $L^2(E;m)$ defined by 
\[
{\cal F}^{\mu}={\cal F}\cap L^2(E;\mu), \quad {\cal E}^{\mu}(u,v)={\cal E}(u,v)+\int_E uv\,{\rm d}\mu, \quad  u,v\in {\cal F}^{\mu}
\]
(see, e.g., \cite[Section 5.1]{CF12} or \cite[Section 6.1]{FOT11}). 
Then $({\cal E}^{\mu},{\cal F}^{\mu})$ is a regular Dirichlet form on $L^2(E;m)$ such that 
${\cal F}\cap C_0(E)$ is a special standard core 
(see \cite[Theorem 5.1.6]{CF12} or  \cite[Theorem 6.1.2]{FOT11}).  
In particular, $({\cal E}^{\mu},{\cal F}^{\mu})$ has the killing measure $\mu$. 
We define the spaces $({\cal F}_{{\rm loc}}^{\mu})^\dagger$ and ${\cal S}({\cal E}^\mu)$ 
similarly to ${\cal F}_{{\rm loc}}^{\dagger}$ and ${\cal S}({\cal E})$, respectively.

Let  $({\cal F}^{\mu}_e,{\cal E}^{\mu})$ be 
the extended Dirichlet space of $({\cal F}^{\mu},{\cal E}^{\mu})$. 
Then by \cite[Proposition 5.1.9]{CF12}, we know that 
${\cal F}^{\mu}_e={\cal F}_e\cap L^2(E;\mu)$.
Moreover, if $(({\cal E}^{\mu})^{{\rm res}},({\cal F}^{\mu})^{{\rm res}})$ denotes 
the resurrected Dirichlet form of $({\cal E}^{\mu},{\cal F}^{\mu})$, 
then $({\cal E}^{\mu})^{\rm res}(u,u)={\cal E}(u,u)$ for any $u\in {\cal F}\cap C_0(E)$  and so 
\begin{equation}\label{eq:identify}
(({\cal E}^{\mu})^{\rm res},({\cal F}^{\mu})^{\rm res})=({\cal E}, {\cal F}).
\end{equation}

We finally establish a criterion for the Liouville property for Schr\"odinger operators. 
Let ${\cal L}^{\mu}$ be the generator of the perturbed Dirichlet form $({\cal E}^{\mu},{\cal F}^{\mu})$ on $L^2(E;m)$.
As in the previous two sections, 
let ${\mathbf M}$ be an $m$-symmetric Hunt process on $E$ such that 
the associated Dirichlet form $({\cal E},{\cal F})$ is regular on $L^2(E;m)$. 
Assume that ${\mathbf M}$ satisfies Assumption \ref{assum:p_t}. 
Let $\mu$ be  a positive Radon measure on $E$ belonging to  $S_1$, 
and  let $\{P_x^{\mu}\}_{x\in {\mathbb R}^d}$ be the law of the subprocess ${\mathbf M}^{\mu}$ of ${\mathbf M}$ 
generated by the multiplicative additive functional $e^{-A_t^{\mu}}$. 
Then ${\mathbf M}^{\mu}$ is associated with $({\cal E}^{\mu},{\cal F}^{\mu})$. 
Let $\zeta^{\mu}$ be the lifetime of ${\mathbf M}^{\mu}$, and $(\zeta^{\mu})^{i}$ its totally inaccessible part
(see \cite[(2.17)]{T25} for definition). 
Then \eqref{eq:identify} implies that 
an $m$-symmetric Hunt process generated by $(({\cal E}^{\mu})^{\rm res},({\cal F}^{\mu})^{\rm res})$ 
is identified with ${\mathbf M}$, and so 
\begin{equation}\label{eq:lifetime}
P_x^{\mu}(\zeta^\mu=(\zeta^\mu)^i<\infty)=1-E_x\left[e^{-A_{\infty}^{\mu}}\right]
\end{equation}
(\cite[Corollary 2.9 (ii)]{T25}). 
In particular, by \cite[Theorem 2.10, Corollary 2.21]{T25}, we obtain 

\begin{thm}{\rm (\cite[Theorem 2.10, Corollary 2.21]{T25})}\label{thm:takeda} 
Assume that ${\mathbf M}$ satisfies Assumption {\rm \ref{assum:p_t}}. 
Let $\mu$ be a positive Radon measure on $E$ belonging to $S_1$. 
Then the next statements are equivalent{\rm :}
\begin{enumerate}
\item[{\rm (i)}]
$P_x^{\mu}(\zeta^\mu=(\zeta^\mu)^i<\infty)=1, \ x\in E.$
\item[{\rm (ii)}] 
$P_x(A^\mu_\infty=\infty)=1, \ x\in E.$
\item[{\rm (iii)}] ${\cal L}^\mu\ \text{satisfies the Liouville property}$.
\end{enumerate}
\end{thm}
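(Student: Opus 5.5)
The plan is to derive the equivalences from the structure already set up: the identification \eqref{eq:identify} of the resurrected form of $({\cal E}^{\mu},{\cal F}^{\mu})$ with $({\cal E},{\cal F})$, the lifetime formula \eqref{eq:lifetime}, and the general results of \cite{T25}. The key observation is that $({\cal E}^{\mu},{\cal F}^{\mu})$ has killing measure exactly $\mu$, so the ``$k$-time changed resurrected process'' appearing in \cite{T25} is precisely the $\mu$-time change of ${\mathbf M}$.

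\textbf{(i)$\iff$(ii).} This is immediate from \eqref{eq:lifetime}. The left-hand side of \eqref{eq:lifetime} equals $1$ for every $x$ if and only if $E_x[e^{-A^{\mu}_{\infty}}]=0$ for every $x$, and this holds if and only if $A^{\mu}_{\infty}=\infty$ $P_x$-a.s. for every $x$. Since Assumption \ref{assum:p_t}(i) gives $k=0$, ${\mathbf M}$ is its own resurrection, and \eqref{eq:lifetime} is available from \cite[Corollary 2.9 (ii)]{T25} once we verify that $\mu\in S_1$. This guarantees that $A^{\mu}$ is a PCAF in the strict sense, so $P_x$ and $E_x$ make sense for every $x$ and not only q.e.

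\textbf{(ii)$\iff$(iii).} Here I would invoke \cite[Theorem 2.10]{T25}. That result states that the Liouville property for the generator of a regular Dirichlet form with killing measure $k$ is equivalent to the conservativeness of the $k$-time changed process of the resurrected process. For $k=\mu$ and resurrected process ${\mathbf M}$, the time changed process is $X_{\tau_t}$ with $\tau_t$ the right-continuous inverse of $A^{\mu}_t$. Its lifetime is $A^{\mu}_{\zeta}=A^{\mu}_{\infty}$, so conservativeness means exactly (ii). The identification \eqref{eq:identify} is what allows this translation. Note also that the spaces ${\cal S}({\cal E}^{\mu})$ are defined through $({\cal F}^{\mu}_{\rm loc})^{\dagger}$; since ${\cal F}^{\mu}_{\rm loc}={\cal F}_{\rm loc}$ for a Radon $\mu$ and the jumping measures coincide, this matches the class used in \cite{T25}.

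\textbf{Main obstacle.} The delicate point is passing from q.e.\ statements in \cite{T25} to statements for every $x\in E$. \cite[Corollary 2.21]{T25} is what supplies this upgrade. I would argue it directly as follows. The functions $x\mapsto E_x[e^{-A^{\mu}_{\infty}}]$ and $P_x(A^{\mu}_{\infty}<\infty)$ are excessive for ${\mathbf M}$, and by the Markov property they are invariant under $p_t$ when ${\mathbf M}$ is conservative. By the strong Feller property (Assumption \ref{assum:p_t}(iv)) together with absolute continuity, an excessive function vanishing $m$-a.e.\ vanishes everywhere, because $u=\lim_{t\to0}p_tu$ and $p_tu=0$. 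Irreducibility (iii) and transience (ii) ensure the hypotheses of \cite{T25} are met.
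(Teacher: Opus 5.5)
Your proposal follows the paper, which likewise gives no independent proof and obtains the theorem by combining the identification \eqref{eq:identify}, the lifetime formula \eqref{eq:lifetime} from \cite[Corollary 2.9 (ii)]{T25}, and \cite[Theorem 2.10, Corollary 2.21]{T25}. One correction to your q.e.-to-everywhere step: $g^{\mu}(x)=E_x[e^{-A^{\mu}_{\infty}}]$ is neither excessive nor $p_t$-invariant for ${\mathbf M}$ (for conservative ${\mathbf M}$, $p_tg^{\mu}(x)=E_x[e^{-(A^{\mu}_{\infty}-A^{\mu}_t)}]\ge g^{\mu}(x)$), so run that argument with $h^{\mu}(x)=P_x(A^{\mu}_{\infty}<\infty)$, which is excessive and vanishes at exactly the same points as $g^{\mu}$.
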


Hence by combining Theorem \ref{thm:pcaf-div} with Theorem \ref{thm:takeda}, 
we have 
\begin{thm}\label{thm:liouville}
Assume the same conditions as in Theorem {\rm \ref{thm:takeda}}.
\begin{enumerate}
\item[{\rm (i)}]
The operator ${\cal L}^\mu\ \text{satisfies the Liouville property}$ under the same setting as in Theorem {\rm \ref{thm:pcaf-div}}.
\item[{\rm (ii)}]
Assume that for some $x_0\in E$, $\int_E G(x_0,y)\,\mu({\rm d}y)<\infty$. 
Then there exists a solution $h\in ({\cal F}^{\mu}_{\rm loc})^{\dagger}\cap L_{{\rm loc}}^{\infty}(E;m)$ to ${\cal L}^{\mu}h=0$ 
such that the set $\{x\in E \mid h(x)\ne 0\}$ has positive capacity with respect to $({\cal E},{\cal F})$.
\end{enumerate}
\end{thm}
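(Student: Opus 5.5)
Part (i) follows by combining earlier results. Under the hypotheses of Theorem \ref{thm:pcaf-div} (Assumptions \ref{assum:p_t} and \ref{assum:const}, condition \eqref{eq:decay}, and the point $x_0$ with $\int_E G(x_0,y)\,\mu({\rm d}y)=\infty$ but finite on compacts), that theorem gives $P_x(A_\infty^\mu=\infty)=1$ for all $x\in E$. Since $\mathbf M$ is conservative by Assumption \ref{assum:const}, we have $\zeta=\infty$ and $A^\mu_\zeta=A^\mu_\infty$. This is statement (ii) of Theorem \ref{thm:takeda}, so the equivalence (ii)$\Leftrightarrow$(iii) there yields the Liouville property for ${\cal L}^\mu$.

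For part (ii), the plan is to negate condition (ii) of Theorem \ref{thm:takeda} and then extract a nontrivial bounded solution. By \eqref{eq:density-2} with $f=1$, we have $E_{x_0}[A^\mu_\zeta]=\int_E G(x_0,y)\,\mu({\rm d}y)<\infty$, so $P_{x_0}(A^\mu_\zeta<\infty)=1$. Hence $g^\mu(x_0)=E_{x_0}[e^{-A^\mu_\zeta}]>0$, and statement (ii) of Theorem \ref{thm:takeda} fails. By that theorem, ${\cal L}^\mu$ fails the Liouville property: there is $h\in{\cal S}({\cal E}^\mu)$ with $h\neq 0$ on a set of positive capacity. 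Such an $h$ lies in $({\cal F}^\mu_{\rm loc})^\dagger\cap L^\infty(E;m)\subset ({\cal F}^\mu_{\rm loc})^\dagger\cap L^\infty_{\rm loc}(E;m)$. It remains to relate the two capacities. The Liouville property in Theorem \ref{thm:takeda} is phrased with q.e.\ for $({\cal E}^\mu,{\cal F}^\mu)$. Because $\mu$ charges no set of zero capacity, the polar sets of $\mathbf M$ and of its subprocess $\mathbf M^\mu$ coincide, so capacity zero sets for $({\cal E}^\mu,{\cal F}^\mu)$ and $({\cal E},{\cal F})$ are the same (\cite[Section 6.1]{FOT11}). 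Therefore $\{h\ne0\}$ has positive $({\cal E},{\cal F})$-capacity.

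A more explicit construction is also available and would serve as a check: take $h=g^\mu$, which is nonnegative, bounded by $1$, and positive at $x_0$. The hope is that Proposition \ref{prop:g-eq}, read as $g^\mu+G(g^\mu\mu)=P_\cdot(A^\mu_\zeta<\infty)$, formally gives $(-{\cal L}+\mu)g^\mu=0$ once the right-hand side is shown to be ${\cal L}$-harmonic. Verifying that $g^\mu\in({\cal F}^\mu_{\rm loc})^\dagger$ and that ${\cal E}^\mu(g^\mu,\varphi)=0$ for $\varphi\in{\cal F}\cap C_0(E)$ is the technical part, and I would avoid it by relying on \cite{T25} as above. The main obstacle is therefore only the bookkeeping: matching the notion of \emph{positive capacity} (for ${\cal E}$ versus ${\cal E}^\mu$) and the integrability class $L^\infty$ versus $L^\infty_{\rm loc}$ with the formulation in \cite[Theorem 2.10, Corollary 2.21]{T25}. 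I would settle the capacity issue through the equality of exceptional sets under the killing perturbation by $\mu\in S_1$.
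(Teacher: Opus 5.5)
Your proposal is correct and follows the paper's route: (i) is Theorem~\ref{thm:pcaf-div} combined with (ii)$\Rightarrow$(iii) of Theorem~\ref{thm:takeda}, and (ii) uses \eqref{eq:density-2} with $f\equiv 1$ to get $P_{x_0}(A^\mu_\infty<\infty)=1$, so condition (ii) of Theorem~\ref{thm:takeda} fails and the Liouville property fails, which produces a bounded nontrivial solution. The paper leaves implicit that the zero-capacity sets of $({\cal E},{\cal F})$ and $({\cal E}^\mu,{\cal F}^\mu)$ coincide, and your remark supplies that point.
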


We refer to \cite[Section 2.1]{FOT11} for the definition of the capacity with respect to $({\cal E},{\cal F})$.

\begin{rem}\rm
Following \cite[Definition 1.1]{GH98}, we say that a smooth measure $\mu\in S_1$ is {\it big} ({\it non-big} otherwise) 
if the operator ${\cal L}^\mu$ satisfies the Liouville property.
It follows from Theorem \ref{thm:takeda} that $\mu\in S_1$ is {big} if and only if 
$$
g^\mu(x)\equiv 0\ \left(\Longleftrightarrow P_x(A^\mu_\zeta=\infty)=1\ \ \text{for all } x\in E\right).
$$
If $\mu$ is supported on a {\it non-thick set} $B$ (i.e., there exists  $x_0\in E$ such that $P_{x_0}(\sigma_B<\zeta)<1$),
then $g^\mu(x_0)>0$ because $P_{x_0}(A^\mu_\zeta=0)>0$ on account of $P_{x_0}(\sigma_B\ge\zeta)>0$. 
Hence we see that if $\mu$ is supported on a non-thick set, then $\mu$ is not big (cf. \cite[Theorem 1.1]{GH98}).
\end{rem}

\section{Examples}\label{sect:exam}
In this section, we present necessary and sufficient conditions for the Liouville property 
for some Schr\"odinger operators related to the (fractional) Laplace operator. 
In what follows, we write ${\rm d}x$ for the Lebesgue measure on ${\mathbb R}^d$. 

\subsection{Potentials of positive functions}
In this subsection, we take some positive functions as a potential term. 
We here use Theorem \ref{thm:takeda}.

\begin{exam}\label{exam:full-support}\rm 
Let us discuss how the decay rate of the potential affects the validity of the Liouville property. 
\begin{enumerate}
\item[(i)]
Let $\{a_{ij}(x)\}_{1\le i,j\le d}$ be a family of bounded Borel measurable functions on ${\mathbb R}^d$ such that 
$a_{ij}(x)=a_{ji}(x)$ for any $x\in {\mathbb R}^d$, and 
for some $c>0$, 
\[
\sum_{i,j=1}^da_{ij}(x)\xi_i\xi_j\ge c|\xi|^2, \quad x\in {\mathbb R}^d, \ \xi \in {\mathbb R}^d.
\]
We define the quadratic form $({\cal E},{\cal F})$ on $L^2({\mathbb R}^d;{\rm d}x)$ by 
\begin{equation}\label{eq:form-diffusion}
\begin{split}
{\cal F}&=\left\{u\in L^2({\mathbb R}^d;{\rm d}x) \mid \frac{\partial u}{\partial x_i}\in L^2({\mathbb R}^d;{\rm d}x) \ (1\le i\le d)\right\},\\
{\cal E}(u,v)&=\int_{{\mathbb R}^d} \sum_{i,j=1}^d a_{ij}(x)\frac{\partial u}{\partial x_i}(x)\frac{\partial v}{\partial x_j}(x)\,{\rm d}x, \quad u,v\in {\cal F}, 
\end{split}
\end{equation}
where we take derivatives in the Schwartz distribution sense. 
Then $({\cal E},{\cal F})$ is a regular Dirichlet form on $L^2({\mathbb R}^d;{\rm d}x)$ 
and there exists an associated symmetric diffusion process on ${\mathbb R}^d$. 
Since the density of the transition function is continuous and fulfills the Gaussian bound (\cite{A67, BK00}), 
we have a version of the process, say ${\mathbf M}$, having the Feller and strong Feller property. 
In particular, the semigroup of ${\mathbf M}$ satisfies Assumption \ref{assum:p_t}. 
We write ${\cal L}$ for the $L^2$-generator of  $({\cal E},{\cal F})$. 

For $p\in {\mathbb R}$, let $w(x)$ be a positive function on ${\mathbb R}^d$ such that for some positive constants $c_1$ and $c_2$, 
\begin{equation}\label{eq:w}
c_1(1+|x|)^p\le w(x)\le c_2(1+|x|)^p, \quad x\in {\mathbb R}^d.
\end{equation}
Let $\mu$ be a measure on ${\mathbb R}^d$ defined as $\mu({\rm d}x)=w(x)\,{\rm d}x$. 
Then $\mu$ is a positive Radon measure of full support, and we can show $\mu\in S_1$ in a similar way to \cite[Example 5.1.1]{FOT11}. 
If $\check{\bf M}$ denotes the time changed process of ${\mathbf M}$ 
with respect to the positive continuous additive functional $A_t^{\mu}=\int_0^t w(X_s)\,{\rm d}s$, 
then the lifetime of $\check{\mathbf M}$ is $A_{\infty}^{\mu}=\int_0^{\infty}w(X_s)\,{\rm d}s$.

We now assume that $d\ge 3$. 
Since ${\mathbf M}$ is transient and the Green function $G(x,y)$ satisfies 
$c_1|x-y|^{2-d}\le G(x,y)\le c_2|x-y|^{2-d}$ (\cite{A67}), 
we can follow the argument in \cite[Example 4.8]{S15} and \cite[Example 4.3]{SU14} to show that 
\[
P_x\left(\int_0^{\infty}w(X_s)\,{\rm d}s=\infty\right)=1 \quad (x\in {\mathbb R}^d)
\iff p\ge -2.
\]
By Theorem \ref{thm:takeda}, this condition is equivalent to the Liouville property for ${\cal L}-w$. 

\item[(ii)] 
Let $\alpha\in (0,2)$, and let $c(x,y)$ be a Borel measurable function on ${\mathbb R}^d\times {\mathbb R}^d$ such that 
for some positive constants $c_1$ and $c_2$, $c_1\le c(x,y)\le c_2$ for any $x,y\in {\mathbb R}^d$. 
We define the quadratic form $({\cal E},{\cal F})$ on $L^2({\mathbb R}^d;{\rm d}x)$ by 
\begin{equation}\label{eq:form-stable-like}
\begin{split}
{\cal F}
&=\left\{u\in L^2({\mathbb R}^d;{\rm d}x) \mid 
\iint_{{\mathbb R}^d\times {\mathbb R}^d\setminus {\rm diag}}\frac{(u(x)-u(y))^2}{|x-y|^{d+\alpha}}\,{\rm d}x{\rm d}y<\infty\right\},\\
{\cal E}(u,v)
&=\iint_{{\mathbb R}^d\times {\mathbb R}^d\setminus {\rm diag}}c(x,y)\frac{(u(x)-u(y))(v(x)-v(y))}{|x-y|^{d+\alpha}}\,{\rm d}x{\rm d}y, 
\quad u,v\in {\cal F},
\end{split}
\end{equation}
where ${\rm diag}=\{(x,y)\in {\mathbb R}^d\times {\mathbb R}^d \mid x=y\}$. 
Then $({\cal E},{\cal F})$ is a regular Dirichlet form on $L^2({\mathbb R}^d;{\rm d}x)$ 
and there exists an associated symmetric jump process on ${\mathbb R}^d$, 
which is called a symmetric stable-like process on ${\mathbb R}^d$ (\cite{CK03}).  
Since the density of the transition function is continuous and fulfills the stable-type bound (\cite{CK03, CK08, KKW17}), 
we have a version of the process, say ${\mathbf M}$, having the Feller and strong Feller property. 
In particular, the semigroup of ${\mathbf M}$ satisfies Assumption \ref{assum:p_t}. 
We write ${\cal L}$ for the $L^2$-generator of  $({\cal E},{\cal F})$.

We now assume that $d>\alpha$. 
Then ${\mathbf M}$ is transient and the Green function $G(x,y)$ satisfies 
$c_1|x-y|^{\alpha-d}\le G(x,y)\le c_2|x-y|^{\alpha-d}$ (\cite{CK03, CK08, KKW17}).
Let $p$ be a constant, 
and let $w(x)$ be a positive function on ${\mathbb R}^d$ satisfying \eqref{eq:w}. 
Then as in (i), we see by \cite[Example 4.8]{S15} and \cite[Example 4.3]{SU14} that 
\[
P_x\left(\int_0^{\infty}w(X_s)\,{\rm d}s=\infty\right)=1 \quad (x\in {\mathbb R}^d)
\iff p\ge -\alpha.
\]
By Theorem \ref{thm:takeda}, this condition is equivalent to the Liouville property for ${\cal L}-w$. 
\end{enumerate}
\end{exam}

\begin{exam}\label{exam:boundary}\rm 
We here discuss how the boundary behavior of the potential affects the validity of the Liouville property.
Let $D\subset {\mathbb R}^d$ be a bounded domain with smooth boundary $\partial D$, 
and let $C_0^{\infty}(D)$ be the set of smooth functions on $D$ with compact support.
\begin{enumerate}
\item[(i)]  
Let $d\ge 3$. We define the quadratic form $({\cal E},{\cal F})$ on $L^2(D;{\rm d}x)$ by 
\begin{equation*}
\begin{split}
{\cal D}({\cal E})
&=\left\{u\in L^2(D;{\rm d}x) \mid  \frac{\partial u}{\partial x_i}\in L^2(D;{\rm d}x) \ (1\le i\le d) \right\},\\
{\cal E}(u,v)&=\frac{1}{2}\int_D \sum_{i=1}^d \frac{\partial u}{\partial x_i}(x) \frac{\partial v}{\partial x_i}(x)\,{\rm d}x, \quad u,v\in {\cal D}({\cal E}).
\end{split}
\end{equation*}
Let  ${\cal F}$ be the closure of $C_0^{\infty}(D)$ with respect to the norm $\sqrt{{\cal E}_1}$ in ${\cal D}({\cal E})$, 
where ${\cal E}_1(u,u)={\cal E}(u,u)+\int_D u^2\,{\rm d}x$. 
Then $({\cal E},{\cal F})$ is a regular Dirichlet form on $L^2(D;{\rm d}x)$, 
and the corresponding symmetric Hunt process ${\mathbf M}$ is an absorbing Brownian motion on $D$. 
This process fulfills Assumption \ref{assum:p_t} (see, e.g., \cite[Theorem 2.2]{CZ95}). 
We write ${\cal L}$ for the $L^2$-generator of  $({\cal E},{\cal F})$. 

For $x\in D$, let $\delta_D(x)=\inf \{|x-y| \mid y\in \partial D\}$. 
For $r\in {\mathbb R}$, let $w(x)$ be a positive function on $D$ such that 
for some positive constants $c_1$ and $c_2$, 
\begin{equation}\label{eq:w-1}
\frac{c_1}{\delta_D(x)^r} \le w(x) \le \frac{c_2}{\delta_D(x)^r}, \quad x\in D.
\end{equation}
Let $\mu$ be a measure on $D$ defined by $\mu({\rm d}x)=w(x)\,{\rm d}x$, 
which is a positive Radon measure of full support and belongs to $S_1$. 
If $G_D(x,y)$ is the Green function of ${\mathbf M}$, then 
\[
G_D(x,y)\asymp \min\left\{\frac{1}{|x-y|^{d-2}}, \frac{\delta_D(x)^{1/2}\delta_D(y)^{1/2}}{|x-y|^d}\right\}, \quad x,y\in D
\]
(\cite[Theorems 1 and 3]{Z86}). 
Hence by following the argument of  \cite[Example 4.11 (ii)]{S15}, we have
\[
P_x\left(\int_0^{\tau_D}w(X_s)\,{\rm d}s=\infty\right)=1 \quad (x\in D) \iff r\ge -2.
\] 
By Theorem \ref{thm:takeda}, 
this condition is equivalent to the Liouville property for ${\cal L}-w$. 
\item[(ii)]
For $\alpha\in (0,2)$, 
we define the quadratic form $({\cal E},{\cal F})$ on $L^2(D;{\rm d}x)$ by 
\begin{equation*}
\begin{split}
{\cal D}({\cal E})
&=\left\{u\in L^2(D;{\rm d}x) \mid \iint_{D\times D\setminus {\rm diag}}\frac{(u(x)-u(y))^2}{|x-y|^{d+\alpha}}\,{\rm d}x{\rm d}y<\infty\right\},\\
{\cal E}(u,v)&=\iint_{D\times D\setminus {\rm diag}}\frac{(u(x)-u(y))(v(x)-v(y))}{|x-y|^{d+\alpha}}\,{\rm d}x{\rm d}y, \quad u,v\in {\cal D}({\cal E}).
\end{split}
\end{equation*}
Let ${\cal F}$ be the closure of $C_0^{\infty}(D)$ with respect to the norm $\sqrt{{\cal E}_1}$ in ${\cal D}({\cal E})$. 
Then $({\cal E},{\cal F})$ is a regular Dirichlet form on $L^2(D;{\rm d}x)$ 
and the associated symmetric Hunt process ${\mathbf M}$ is called a censored stable process on $D$ (\cite{BBC03}), 
which satisfies Assumption \ref{assum:p_t}. 
In fact, we can verify the strong Feller property for ${\mathbf M}$ 
by using  \cite[Theorem 1.1]{CKS10} and \cite[Proposition 2.14, Theorem 3.2 and Remark 3.4]{CKSV20}. 
We write ${\cal L}$ for the $L^2$-generator of  $({\cal E},{\cal F})$. 

Suppose first that $\alpha\in (0,1]$. 
Then the censored stable process is conservative (\cite[Theorem 1.1]{BBC03}), and thus recurrent. 
Hence for any non-negative function $w$ with ${\rm d}x(\{x\in D\mid w(x)>0\})>0$,
\[
P_x\left(\int_0^{\tau_D}w(X_s)\,{\rm d}s=\infty\right)=1 \quad (x\in D),
\] 
and the Liouville property for ${\cal L}-w$ follows from Theorem \ref{thm:takeda}.

Suppose next that $\alpha\in (1,2)$. 
Then the censored stable process hits the boundary $\partial D$ almost surely, 
and so this process is explosive (\cite[Theorem 1.1]{BBC03}). 
Let $w(x)$ be a positive function on $D$ satisfying \eqref{eq:w-1}. 
Then as in (i), we see by  \cite[Example 4.11 (ii)]{S15} that 
\[
P_x\left(\int_0^{\tau_D}w(X_s)\,{\rm d}s=\infty\right)=1 \quad (x\in D) \iff r\ge -\alpha.
\] 
By  Theorem \ref{thm:takeda}, 
this condition is equivalent to the Liouville property for ${\cal L}-w$.

\end{enumerate}
\end{exam}

\subsection{Potentials of non-fully supported functions and measures}
When the potential term is not of full support, we can not use the results of \cite{S15, SU14} as in the previous subsection. 
However, if we assume the rotation invariance properties for the potential terms 
and Dirichlet forms, 
then we can verify the conditions in Theorem \ref{thm:pcaf-div} 
by Remark \ref{rem:0-1} (ii) and Corollary \ref{cor:pot-decay} below.

Let $({\cal E},{\cal F})$ be a rotation invariant, transient and regular Dirichlet form on $L^2({\mathbb R}^d;{\rm d}x)$ 
in Examples \ref{exam:diffusion} or \ref{exam:stable-like} below. 
Then there exists an associated symmetric Hunt process on ${\mathbb R}^d$, say ${\mathbf M}$,  
which satisfies the Feller and strong Feller properties. 
In particular, ${\mathbf M}$ fulfills Assumptions \ref{assum:p_t} and \ref{assum:const}.
Let $G(x,y)=G^{(\alpha)}(x,y)$ be the Green function of ${\mathbf M}$, 
where we set $\alpha=2$ if we take the Dirichlet form in Example \ref{exam:diffusion}. 
Recall that there exist positive constants $c_1$ and $c_2$ such that 
$c_1|x-y|^{\alpha-d}\le G(x,y)\le c_2|x-y|^{\alpha-d} \ (x\ne y)$.

We note that all the measures in the examples below are positive Radon measures in $S_1$, and rotation invariant. 
Hence \eqref{eq:decay} is fulfilled, 
and Theorems \ref{thm:pcaf-div} and \ref{thm:liouville} work well. 

\begin{exam}\label{exam:absolute}\rm 
Let $\{f(n)\}_{n\ge 1}$ be a positive sequence such that 
$\{f(n)\}_{n\ge 1}$ is strictly increasing, 
$\lim_{n\rightarrow\infty}f(n)=\infty$ and $\lim_{n\rightarrow\infty}f(n+1)/f(n)=1$. 
Let $\{h(n)\}_{n\ge 1}$ be a positive sequence so that 
\begin{equation}\label{eq:increasing}
f(n+1)-f(n)(1+h(n))\ge 0, \quad n\ge 1.
\end{equation}
Under this setting, if we define 
\[
a_n=f(n), \quad b_n=f(n)(1+h(n)), \quad n\ge 1,
\]
then $a_n<b_n\le a_{n+1}$ for any $n\ge 1$.

For $r\in {\mathbb R}$, let $\mu_r$ be a measure on ${\mathbb R}^d$ defined by 
\[
\mu_r({\rm d}y)=\sum_{n=1}^{\infty}{\bf 1}_{a_n\le |y|\le b_n}(y)|y|^{-r}\,{\rm d}y.
\]
Then the support of $\mu$ is an infinite and disjoint union of annuli. 
If $r>\alpha$, then $G\mu_r(0)<\infty$ and so $P_x(A_{\infty}^{\mu_r}<\infty)=1$ for any $x\in {\mathbb R}^d$; 
Theorem \ref{thm:liouville} implies that the Liouville property fails for ${\cal L}^{\mu}$.

In what follows, we assume that $r\le \alpha$. 
Then for any compact set $K$ in ${\mathbb R}^d$, 
there exists $c>0$ such that $\int_K G(0,y)\,\mu_r({\rm d}y)\le c\int_K G(0,y)\,{\rm d}y<\infty$. 
Moreover, 
\[
\int_{{\mathbb R}^d}G(0,y)\,\mu_r({\rm d}y)
\asymp \sum_{n=1}^{\infty}\int_{a_n\le |y|\le b_n}\frac{1}{|y|^{r+d-\alpha}}\,{\rm d}y
=
\begin{dcases}
\frac{\omega_d}{\alpha}\sum_{n=1}^{\infty}(b_n^{\alpha-r}-a_n^{\alpha-r}) & (r<\alpha),\\
\omega_d \log\frac{b_n}{a_n} & (r=\alpha),
\end{dcases}
\]
where $\omega_d$ is the surface area of a unit ball in ${\mathbb R}^d$. 
If $r<\alpha$, then 
\[
b_n^{\alpha-r}-a_n^{\alpha-r}
=f(n)^{\alpha-r}\{(1+h(n))^{\alpha-r}-1\}\sim (\alpha-r) f(n)^{\alpha-r}h(n) \quad (n\rightarrow\infty). 
\]
Since $\log(1+t)\sim t \ (t\rightarrow 0)$,  we also have
\[
\log\frac{b_n}{a_n}=\log(1+h(n))\sim h(n) \quad (n\rightarrow\infty).
\]
Therefore, Theorem \ref{thm:pcaf-div} implies that if  $r\le \alpha$ and $\alpha>1$, then 
\[
P_x(A_{\infty}^{\mu_r}=\infty)=1 \ (x\in {\mathbb R}^d) \iff \int_{{\mathbb R}^d}G(0,y)\,\mu_r({\rm d}y)=\infty \iff \sum_{n=1}^{\infty} f(n)^{\alpha-r}h(n)=\infty.
\]
By Theorem \ref{thm:liouville}, this condition is equivalent to the Liouville property for ${\cal L}^{\mu_r}$.
On the other hand, if  $r\le \alpha$ and $\alpha\le 1$, then 
\[
\sum_{n=1}^{\infty} f(n)^{\alpha-r}h(n)<\infty \iff \int_{{\mathbb R}^d}G(0,y)\,\mu_r({\rm d}y)<\infty 
\Longrightarrow P_x(A_{\infty}^{\mu_r}<\infty)=1 \ (x\in {\mathbb R}^d). 
\]

Assume now that $1<\alpha<2$ and $r\le \alpha$. 
Let us take $f(n)=n^p$ and $h(n)=n^{-q}$ for some positive constants $p$ and $q$ 
such that $q>1$, or $q=1$ and $p\ge 1$. 
We can see that this condition is equivalent to the validity of \eqref{eq:increasing}. 
Under this setting, we have $ f(n)^{\alpha-r}h(n)=n^{p(\alpha-r)-q}$ and so 
\[
P_x(A_{\infty}^{\mu_r}=\infty)=1 \ (x\in {\mathbb R}^d)  \iff q\le p(\alpha-r)+1.
\]
We assume in addition that $r=0$. 
Then the volume of the annulus $\{x\in {\mathbb R}^d \mid a_n\le |x|\le b_n\}$ is $\omega_d(b_n^d-a_n^d)$ 
and 
\[
\omega_d(b_n^d-a_n^d)=\omega_d n^{pd}\left\{\left(1+\frac{1}{n^q}\right)^d-1\right\}\sim d\omega_dn^{pd-q} \quad (n\rightarrow\infty). 
\]
Hence if $q>pd$, then the volume of the annulus decays but the Liouville property holds for ${\cal L}^{\mu}$.
\end{exam}

\begin{exam}\label{exam:singular}\rm 
Let $\alpha\in (1,2)$. 
For $R>0$, let $\delta_R$ be the surface measure on $\{x\in {\mathbb R}^d \mid |x|=R\}$.
Let $\{s_n\}$ be a positive increasing sequence such that $s_n\rightarrow\infty$ as $n\rightarrow\infty$. 
For $r\in {\mathbb R}$, if we define the measure 
\[
\nu({\rm d}y)=\sum_{n=1}^{\infty}|y|^{-r}\,\delta_{s_n}({\rm d}y)=\sum_{n=1}^{\infty}s_n^{-r}\,\delta_{s_n}({\rm d}y), 
\]
then as in Example \ref{exam:absolute}, we obtain 
\begin{equation}\label{eq:cond-1}
P_x(A_{\infty}^{\nu}=\infty)=1 \ (x\in {\mathbb R}^d) \iff \int_{{\mathbb R}^d}G(0,y)\,\nu({\rm d}y)=\infty \iff \sum_{n=1}^{\infty} s_n^{\alpha-1-r}=\infty.
\end{equation}
By Theorem \ref{thm:liouville}, this condition is equivalent to the Liouville property for ${\cal L}^{\nu}$.

We see by \eqref{eq:cond-1} that if $r\le \alpha-1$, then $P_x(A_{\infty}^{\nu}<\infty)=1$ for any $x\in {\mathbb R}^d$.
We now assume that $r>\alpha-1$. 
If we take $s_n=n^p$ for some $p>0$, then $s_n^{\alpha-1-r}=n^{p(\alpha-1-r)}$ and so 
\[
P_x(A_{\infty}^{\nu}=\infty)=1 \ (x\in {\mathbb R}^d) \iff p(\alpha-1-r)\ge -1\iff p\le \frac{1}{r-(\alpha-1)}.
\]
\end{exam}

\appendix
\section{Rotation invariance}\label{sect:rotation}

In this appendix, we first discuss the rotation invariance of the laws of Feller processes. 
We next introduce the rotation invariance of regular Dirichlet forms on $L^2({\mathbb R}^d;{\rm d}x)$. 
We finally discuss the distributions of positive continuous additive functionals in the strict sense 
for Feller processes associated with rotation invariant regular Dirichlet forms.  

\subsection{Rotation invariance of Feller processes}
For $x\in {\mathbb R}^d$, let $\delta_x$ be the Dirac measure at $x$. 
Let $p_t(x,A) \ (x\in {\mathbb R}^d, t\ge 0, A\in {\cal B}({\mathbb R}^d))$ be a sub-Markov transition function 
on $({\mathbb R}^d,{\cal B}({\mathbb R}^d))$ with $p_0(x,\cdot)=\delta_x(\cdot) \ (x\in {\mathbb R}^d)$. 
Let $\{p_t\}_{t\ge 0}$ be an associated Markovian semigroup defined by  
$p_tf(x)=\int_{{\mathbb R}^d}p_t(x,{\rm d}y)f(y)$ for $f\in {\cal B}_b({\mathbb R}^d)$. 
Let $C({\mathbb R}^d)$ be the set of continuous functions on ${\mathbb R}^d$, and let 
\[
C_{\infty}({\mathbb R}^d)
=\left\{f\in C({\mathbb R}^d) \mid \lim_{|x|\rightarrow\infty}f(x)=0\right\}. 
\]
We say that $\{p_t\}_{t\ge 0}$ is a Feller semigroup if  
$p_tf\in C_{\infty}({\mathbb R}^d)$ for any $t>0$ and $f\in C_{\infty}({\mathbb R}^d)$. 

As we see from \cite[Theorem 9.4 (p.~46--)]{BG68}, 
we can realize a Hunt process from the Feller semigroup as follows: 
Let $\Omega$ be 
the set of all functions $\omega: [0,\infty) \to ({\mathbb R}^d)_{\Delta}$ such that 
each $\omega$ is right continuous and has left limits in $[0,\infty)$, $\omega(\infty)=\Delta$,  
$\omega(s)=\Delta$ if $s\ge t$ and $\omega(t)=\Delta$.
We then define the function $X_t \ (t\ge 0)$ on $\Omega$ by $X_t(\omega)=\omega(t) \ (t\ge 0)$. 
For $t\ge 0$, we define the shift operator $\theta_t:\Omega \to\Omega$ by $\theta_t w(s)=w(t+s) \ (s\ge 0)$, 
and the lifetime $\zeta:\Omega\to [0, \infty]$ by $\zeta(\omega)=\inf\{t>0 \mid X_t(\omega)=\Delta\}$. If we further define
\[
{\cal F}_t^0=\sigma(X_s, \, s\le t) \  (t\ge 0), \quad {\cal F}^0=\sigma(X_s, \, s\ge 0), 
\]
then we can realize a Markov process 
\[
(\Omega, \{X_t\}_{t\ge 0},\{P_x\}_{x\in {\mathbb R}^d}, {\cal F}^0, \{{\cal F}_t^0\}_{t\ge 0},\{\theta_t\}_{t\ge 0},\zeta)
\] 
on $({\mathbb R}^d,{\cal B}({\mathbb R}^d))$ having $p_t(x,A)$ as the transition function.

Let ${\cal F}$ be a completion of ${\cal F}^0$ with respect to the family of measures 
$P_{\mu}(\cdot)=\int_{{\mathbb R}^d}\,\mu({\rm d}x)P_x(\cdot)$ for any finite measures $\mu$ on ${\mathbb R}^d$, 
and let ${\cal F}_t$ be a completion of ${\cal F}_t^0$ in ${\cal F}$. 
Then the filtration $\{{\cal F}_t\}_{t\ge 0}$ is right continuous and 
\begin{equation}\label{eq:realization}
{\mathbf M}=(\Omega, \{X_t\}_{t\ge 0},\{P_x\}_{x\in {\mathbb R}^d}, {\cal F}, \{{\cal F}_t\}_{t\ge 0},\{\theta_t\}_{t\ge 0},\zeta)
\end{equation}
is a Hunt process on $({\mathbb R}^d,{\cal B}({\mathbb R}^d))$ having $p_t(x,A)$ as the transition function. 
This process is called a Feller process. 

For a Feller process ${\mathbf M}$ in \eqref{eq:realization}, 
we can define the transformation of paths by orthogonal matrices:  
Let $O(d)$ denote the set of $d\times d$-orthogonal matrices. 
Then for $Q\in O(d)$, $w\in \Omega$ and $\Lambda\in {\cal F}$, 
we define $Qw\in \Omega$ and $Q\Lambda\in {\cal F}$, respectively, by 
$(Qw)(t)=Q(w(t)) \ (t\ge 0)$ and $Q\Lambda=\{Qw \mid w\in \Lambda\}$, 
where we set $Q\Delta=\Delta$. 
We can further define a probability measure $P_x^Q$ on $\Omega$ by 
\[
P_x^Q(\Lambda)=P_{Qx}(Q\Lambda), \quad \Lambda\in {\cal F}.
\]
If we assume in addition that ${\mathbf M}$ satisfies the strong Feller property, 
then for each $t>0$, there exists a mesurable function $(x,y)\mapsto p_t(x,y)$ on ${\mathbb R}^d\times {\mathbb R}^d$
such that $p_t(x,{\rm d}y)=p_t(x,y)\,{\rm d}y$. 
Then the rotation invariance of $p_t(x,y)$ yields that of $P_x$:
\begin{lem}\label{lem:rotation}
Let ${\mathbf M}$ be a Feller process on ${\mathbb R}^d$ as in \eqref{eq:realization}. 
Assume that ${\mathbf M}$ satisfies the strong Feller property and for any $Q\in O(d)$,
\[
p_t(x,y)=p_t(Qx,Qy), \quad x,y\in {\mathbb R}^d, \, t>0.
\]
Then for each $x\in {\mathbb R}^d$, $P_x(\Lambda)=P_x^Q(\Lambda)$ for any $\Lambda\in {\cal F}$.
\end{lem}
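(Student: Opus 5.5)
The plan is to show that the two probability measures $P_x$ and $P_x^Q$ on $(\Omega,{\cal F})$ have the same finite-dimensional distributions. They then agree on ${\cal F}^0=\sigma(X_s,\,s\ge 0)$ by the $\pi$-$\lambda$ theorem, and on its completion ${\cal F}$ as well. For $P_x^Q$ to make sense on ${\cal F}$ we also need $Q\Lambda\in{\cal F}$ for $\Lambda\in{\cal F}$. For $\Lambda\in{\cal F}^0$ this holds because $w\mapsto Qw$ is a bijection of $\Omega$ with inverse $Q^{-1}=Q^{T}$, and it maps cylinder sets to cylinder sets:
\[
Q\{X_{t_1}\in A_1,\dots,X_{t_n}\in A_n\}=\{X_{t_1}\in QA_1,\dots,X_{t_n}\in QA_n\},
\]
where we use $Q\Delta=\Delta$, so $\Delta\in A_i$ iff $\Delta\in QA_i$. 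Hence $\Lambda\mapsto Q\Lambda$ preserves ${\cal F}^0$. I would treat completed events at the end.

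Fix $0<t_1<\dots<t_n$ and Borel sets $A_i\subset({\mathbb R}^d)_\Delta$. First I reduce to $A_i\subset{\mathbb R}^d$. Indeed $\{X_t=\Delta\}=\{\zeta\le t\}$, and $P_x(\zeta\le t)=1-p_t{\bf 1}(x)$, so the cemetery components are determined by the sub-Markov kernels in the usual way (extend $p_t$ to $({\mathbb R}^d)_\Delta$). By the Markov property,
\[
P_{Qx}(X_{t_i}\in QA_i,\ i\le n)=\int_{QA_1}\!\!\cdots\!\int_{QA_n}p_{t_1}(Qx,y_1)\,p_{t_2-t_1}(y_1,y_2)\cdots p_{t_n-t_{n-1}}(y_{n-1},y_n)\,{\rm d}y_1\cdots{\rm d}y_n .
\]
Substitute $y_i=Qz_i$. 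Lebesgue measure is invariant under orthogonal maps, and the hypothesis gives $p_s(Qz,Qz')=p_s(z,z')$, so the integral equals the same expression with $x$ and $A_i$ in place of $Qx$ and $QA_i$. That expression is $P_x(X_{t_i}\in A_i,\ i\le n)$. The Markov chain of equalities handling the $\Delta$-components uses $p_s{\bf 1}(Qz)=p_s{\bf 1}(z)$, which follows by integrating the hypothesis. Time $t=0$ is trivial since $X_0=x$ under both measures.

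Cylinder sets form a $\pi$-system generating ${\cal F}^0$, so $P_x=P_x^Q$ on ${\cal F}^0$. The only delicate point, though routine, is the completion. For a $P_\mu$-null set $N\subset N_0\in{\cal F}^0$, the set $QN$ is contained in $QN_0$. We have $P_{Qx}(QN_0)=P_x(N_0)$ for every $x$, so $P_{\nu}(QN_0)=P_{\nu\circ Q}(N_0)=0$ once we use that $P_\mu(N_0)=0$ holds for all finite $\mu$. Hence $Q$ maps ${\cal F}$ into ${\cal F}$, and the identity $P_x(\Lambda)=P_{Qx}(Q\Lambda)$ extends from ${\cal F}^0$ to ${\cal F}$.
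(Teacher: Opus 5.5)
Your proposal is correct and follows essentially the same route as the paper. Both compute $P_{Qx}(Q\mathcal C)$ on cylinder sets via the Markov property, substitute $y_i=Qz_i$ using the invariance of Lebesgue measure and $p_t(Qx,Qy)=p_t(x,y)$, and then extend from cylinders to the whole $\sigma$-field.

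Your extra care is fine, and the $\Delta$-reduction is not even needed, since cylinders with $A_i\in\mathcal B(\mathbb R^d)$ already form a generating $\pi$-system. One correction in the completion step: the null set depends on the measure, so you cannot assume a single $N_0$ that is null for all $\mu$. Instead, for a given $\nu$, sandwich $\Lambda$ between $\mathcal F^0$-sets whose difference is $P_{\nu\circ Q}$-null.
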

\begin{proof}
For any sequence $\{t_k\}_{k=1}^n\subset [0,\infty)$ with $t_1<\cdots<t_n$ 
and $A_1,\dots, A_n\in {\cal B}({\mathbb R}^d)$, 
we define a cylinder set ${\cal C}=\{X_{t_1}\in A_1, \dots, X_{t_n}\in A_n\}\in {\cal F}$. 
Since $Q{\cal C}=\{X_{t_1}\in QA_1, \dots, X_{t_n}\in QA_n\}\in {\cal F}$, 
we have by the Markov property and the change of variables formula with $z=Qy_1$,  
\begin{equation*}
\begin{split}
P_x^Q({\cal C})
&=P_{Qx}(X_{t_1}\in QA_1,\dots, X_{t_n}\in QA_n)\\
&=E_{Qx}[P_{X_{t_1}}(X_{t_2-t_1}\in QA_2, \dots, X_{t_n-t_1}\in QA_n); X_{t_1}\in QA_1]\\
&=\int_{QA_1}p_{t_1}(Qx,z)P_z(X_{t_2-t_1}\in QA_2, \dots, X_{t_n-t_1}\in QA_n)\,{\rm d}z\\
&=\int_{A_1}p_{t_1}(Qx,Qy_1)P_{Qy_1}(X_{t_2-t_1}\in QA_2, \dots, X_{t_n-t_1}\in QA_n)\,{\rm d}y_1.
\end{split}
\end{equation*}
Then by assumption and the inductive argument with the Markov property, 
the last term above is equal to 
\begin{equation*}
\begin{split}
&\int_{A_1}p_{t_1}(x,y_1)P_{Qy_1}(X_{t_2-t_1}\in QA_2, \dots, X_{t_n-t_1}\in QA_n)\,{\rm d}y_1\\
&=\int_{A_1}p_{t_1}(x,y_1)
\left\{\int_{A_2}p_{t_2-t_1}(y_1,y_2)\cdots\left(\int_{A_n}p_{t_n-t_{n-1}}(y_{n-1},y_n)\,{\rm d}y_n\right)\cdots{\rm d}y_2\right\}{\rm d}y_1\\
&=P_x(X_{t_1}\in A_1, \dots, X_{t_n}\in A_n)=P_x({\cal C}).
\end{split}
\end{equation*}
We thus have $P_x=P_x^Q$ on cylindrical sets, and so  $P_x=P_x^Q$ on ${\cal F}$. 
\end{proof}

\subsection{Rotation invariance of Dirichlet forms}
Let us introduce the notion of the rotation invariance for regular Dirichlet forms on $L^2({\mathbb R}^d;{\rm d}x)$.
For $Q\in O(d)$ and a real valued function $f$ on ${\mathbb R}^d$, 
we define the function $f\circ Q$ on ${\mathbb R}^d $ by $(f\circ Q)(x)=f(Qx) \ (x\in {\mathbb R}^d)$. 
\begin{defn}
Let $({\cal E},{\cal F})$ be a regular Dirichlet form on $L^2({\mathbb R}^d;{\rm d}x)$. 
Then $({\cal E},{\cal F})$ is rotation invariant if for any $Q\in O(d)$, 
the following two conditions hold{\rm :} 
\begin{itemize}
\item For any $f\in L^2({\mathbb R}^d;{\rm d}x)$, 
$f\circ Q\in{\cal F}$ if and only if $f\in {\cal F}$.
\item For any $f\in {\cal F}$, ${\cal E}(f\circ Q,f\circ Q)={\cal E}(f,f)$.
\end{itemize}
\end{defn} 

We show that the rotation invariance of a regular Dirichlet form yields 
that of the finite-dimensional distributions of an associated symmetric Hunt process.
\begin{lem}\label{lem:rot-inv}
Let $({\cal E},{\cal F})$ be a rotation invariant regular Dirichlet form on $L^2({\mathbb R}^d;{\rm d}x)$, 
and let ${\mathbf M}=(\{X_t\}_{t\ge 0}, \{P_x\}_{x\in {\mathbb R}^d})$ 
be a symmetric Hunt process on ${\mathbb R}^d$ associated with $({\cal E},{\cal F})$. 
Then there exists a properly exceptional Borel set $N\subset {\mathbb R}^d$ 
such that 
\begin{equation}\label{eq:rot-inv}
P_x(X_t\in A)=P_{Qx}(X_t\in QA), \quad x\in {\mathbb R}^d\setminus N, \ t>0, \ A\in {\cal B}({\mathbb R}^d).
\end{equation}
In particular, if ${\mathbf M}$ satisfies the strong Feller property, 
then the equality above is valid for any $x\in {\mathbb R}^d$.
\end{lem}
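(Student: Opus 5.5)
The plan is to transfer the rotation invariance of the form to the $L^2$-semigroup $\{T_t\}_{t>0}$, and then pass to the transition function of ${\mathbf M}$ outside a properly exceptional set.

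For $Q\in O(d)$, define $U_Qf=f\circ Q$. Since Lebesgue measure is invariant under $Q$, $U_Q$ is a unitary operator on $L^2({\mathbb R}^d;{\rm d}x)$. The definition of rotation invariance gives $U_Q{\cal F}={\cal F}$ and ${\cal E}(U_Qf,U_Qf)={\cal E}(f,f)$. By polarization, ${\cal E}(U_Qf,U_Qg)={\cal E}(f,g)$. Hence the form $(f,g)\mapsto{\cal E}(U_Q^{-1}f,U_Q^{-1}g)$ on ${\cal F}$ coincides with ${\cal E}$. Its generator is $U_Q{\cal L}U_Q^{-1}$, so ${\cal L}$ commutes with $U_Q$. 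Functional calculus then gives $T_t U_Q=U_Q T_t$ for all $t>0$; concretely, $T_t(f\circ Q)=(T_tf)\circ Q$ a.e.

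Next I would pass to the process. For every Borel $f\in L^2$, $p_tf$ is a quasi-continuous version of $T_tf$, so $p_t(f\circ Q)(x)=p_tf(Qx)$ holds for a.e.\ $x$. Both sides are quasi-continuous in $x$. The right side is quasi-continuous because $Q$ preserves capacity, which follows from rotation invariance of ${\cal E}_1$. Two quasi-continuous functions that agree a.e.\ agree q.e., so the identity holds off an exceptional set.

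To upgrade this to a single properly exceptional set $N$, valid for all $t$ and all $A$, I would use the standard argument. Take a countable family: $t$ rational, and $f$ ranging over the indicators of a countable $\pi$-system generating ${\cal B}({\mathbb R}^d)$ (balls with rational data), truncated to lie in $L^2$. Also take $Q$ in a countable dense subset of $O(d)$. The union of the exceptional sets is exceptional, and it is contained in a properly exceptional Borel set $N$ by \cite[Theorem 4.1.1]{FOT11}. I would enlarge $N$ so that it is invariant under the dense subset of rotations; applying each rotation to $N$ again yields a properly exceptional set, by the capacity invariance.

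The extension steps then go as follows. Extension to all Borel $A$ is by monotone class. Extension to all $t>0$ uses right continuity of paths together with bounded continuous $f$ and a monotone class argument. Extension to all $Q$ uses continuity of $Q\mapsto P_{Qx}(X_t\in QA)$ for $f$ continuous with compact support, via dominated convergence and right continuity. This last step is the main obstacle: the density of the countable subset must be made to interact with the exceptional set. One option is to avoid it by handling one $Q$ at a time; this gives an $N_Q$ depending on $Q$. The other is to argue via continuity in $Q$ of $\int p_tf(Qx)g(x)\,dx$. I would try the continuity route first.

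Finally, assume ${\mathbf M}$ has the strong Feller property. Then both $x\mapsto P_x(X_t\in A)$ and $x\mapsto P_{Qx}(X_t\in QA)=p_t{\bf 1}_{QA}(Qx)$ are continuous. They agree off $N$, and $N$ has Lebesgue measure zero, so its complement is dense. Hence they agree everywhere, and the equality holds for any $x\in{\mathbb R}^d$.
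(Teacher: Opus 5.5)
Your core argument is the paper's. Invariance of $({\cal E},{\cal F})$ under $U_Qf=f\circ Q$ gives $U_QT_tU_Q^{-1}=T_t$, which is the paper's $T_t^Q=T_t$, and your strong Feller part is the same continuity-plus-density argument. Your passage from the $L^2$ identity to the process is a correct filling-in of what the paper compresses into ``which implies \eqref{eq:rot-inv}'': quasi-continuity of $p_tf$ and of $p_tf\circ Q$, a.e.\ implying q.e., a countable family of $t$ and $A$, enclosure in a properly exceptional Borel set, then monotone class and right continuity in $t$.

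The step that would fail is making $N$ independent of $Q$, so drop the continuity route. Continuity of $Q\mapsto P_{Qx}(X_t\in QA)$ at a fixed $x$ needs continuity of $y\mapsto p_tf(y)$ in the starting point. Right continuity of paths and dominated convergence do not give this; without Feller-type regularity $p_tf$ is only quasi-continuous. The integrated map $Q\mapsto\int p_tf(Qx)g(x)\,{\rm d}x$ is continuous, but it only yields a.e.\ statements, each with its own $Q$-dependent null set.

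In fact a $Q$-independent $N$ need not exist. Take $d\ge 2$ and modify Brownian motion so that, started at a fixed $x_1\ne 0$, it stays at $x_1$ forever. Since $\{x_1\}$ is polar, this is still a symmetric Hunt process associated with the rotation invariant classical Dirichlet form. For $x\ne x_1$ with $|x|=|x_1|$, pick $Q$ with $Qx=x_1$ and set $A=\{x\}$; then $P_x(X_t\in A)=0$ but $P_{Qx}(X_t\in QA)=1$. So a uniform $N$ would have to contain a punctured sphere, which has positive capacity. For a fixed $Q$, however, $N_Q=\{x_1,Q^{-1}x_1\}$ works.

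Hence your fallback $N=N_Q$ is the right reading. It is also how the paper's proof reads, since $Q$ is fixed before $T_t^Q$ is defined. The dependence on $Q$ disappears under the strong Feller property, which is the only case used later (Lemma \ref{lem:rot-heat}).

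With $Q$ fixed you also do not need the rotation-invariant enlargement of $N$. As written that step is incomplete anyway: capacity invariance only makes $QN$ exceptional, not properly exceptional. You would have to re-enclose in a properly exceptional set and iterate.
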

\begin{proof}
Let $({\cal E},{\cal F})$ be a rotation invariant regular Dirichlet form on $L^2({\mathbb R}^d;{\rm d}x)$, 
and let $\{T_t\}_{t\ge 0}$ be a strongly continuous Markovian semigroup on $L^2({\mathbb R}^d;{\rm d}x)$ 
associated with $({\cal E},{\cal F})$. 
For $Q\in O(d)$, we define 
\[
T_t^Qf(x)=T_t(f\circ Q^{-1})(Qx), \quad f\in L^2({\mathbb R}^d;{\rm d}x), \ x\in {\mathbb R}^d, \ t>0.
\]
Then $\{T_t^{Q}\}_{t\ge 0}$ is also a strongly continuous Markovian semigroup on $L^2({\mathbb R}^d;{\rm d}x)$. 
Moreover, by the rotation invariance of the Lebesgue measure, 
we have for any $f\in L^2({\mathbb R}^d;{\rm d}x)$,
\begin{equation*}
\begin{split}
\int_{{\mathbb R}^d}(f(x)-T_t^Qf(x))f(x)\,{\rm d}x
&=\int_{{\mathbb R}^d}\left\{(f\circ Q^{-1})(Qx)-T_t(f\circ Q^{-1})(Qx)\right\} (f\circ Q^{-1})(Qx)\,{\rm d}x\\
&=\int_{{\mathbb R}^d}\left\{(f\circ Q^{-1})(x)-T_t(f\circ Q^{-1})(x)\right\} (f\circ Q^{-1})(x)\,{\rm d}x.
\end{split}
\end{equation*}
Since $({\cal E},{\cal F})$ is rotation invariant,  we obtain  for any $f\in {\cal F}$, 
\begin{equation*}
\begin{split}
&\lim_{t\rightarrow +0}\frac{1}{t}\int_{{\mathbb R}^d}(f(x)-T_t^Qf(x))f(x)\,{\rm d}x\\
&=\lim_{t\rightarrow +0}
\frac{1}{t}\int_{{\mathbb R}^d}\left\{(f\circ Q^{-1})(x)-T_t(f\circ Q^{-1})(x)\right\} (f\circ Q^{-1})(x)\,{\rm d}x\\
&={\cal E}(f\circ Q^{-1},f\circ Q^{-1})={\cal E}(f,f).
\end{split}
\end{equation*}
Therefore, $T_t^Qf=T_tf \ (t>0)$ for any $f\in L^2({\mathbb R}^d;{\rm d}x)$, 
which implies \eqref{eq:rot-inv}. 

Assume in addition that ${\mathbf M}$ satisfies the strong Feller property. 
Then for any fixed $t>0$ and $A\in {\cal B}({\mathbb R}^d)$, 
the function $x\mapsto P_x(X_t\in A)$ is continuous on ${\mathbb R}^d$. 
We also know that any properly exceptional set is of zero capacity, and so of zero Lebesgue measure. 
Therefore, we have \eqref{eq:rot-inv} for any $x\in {\mathbb R}^d$.
\end{proof}

\begin{lem}\label{lem:rot-heat}
Let $({\cal E},{\cal F})$ and ${\mathbf M}=(\{X_t\}_{t\ge 0}, \{P_x\}_{x\in {\mathbb R}^d})$ be as in Lemma {\rm \ref{lem:rot-inv}}. 
Assume that ${\mathbf M}$ satisfies the strong Feller property.
Let $p_t(x,y)$ be the density of the transition function for ${\mathbf M}$. 
Then for any $x\in {\mathbb R}^d$, $t>0$ and $Q\in O(d)$,
\begin{equation}\label{eq:rot-heat}
p_t(x,y)=p_t(Qx,Qy),  \quad \text{$m$-a.e.\ $y\in {\mathbb R}^d$}.
\end{equation}
If, for any fixed $x\in {\mathbb R}^d$ and $t>0$, the function $y\mapsto p_t(x,y)$ is continuous on ${\mathbb R}^d$, 
then the equality above is valid for any $y\in {\mathbb R}^d$. 
\end{lem}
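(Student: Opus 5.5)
The plan is to derive \eqref{eq:rot-heat} directly from Lemma \ref{lem:rot-inv}, which under the strong Feller property gives $P_x(X_t\in A)=P_{Qx}(X_t\in QA)$ for every $x\in{\mathbb R}^d$, $t>0$ and $A\in{\cal B}({\mathbb R}^d)$. Fix $x$, $t$ and $Q$, and write both sides through the transition density. The left side is $\int_A p_t(x,y)\,{\rm d}y$. For the right side, substitute $z=Qy$ and use the fact that the Lebesgue measure is invariant under orthogonal maps ($|\det Q|=1$):
\[
P_{Qx}(X_t\in QA)=\int_{QA}p_t(Qx,z)\,{\rm d}z=\int_A p_t(Qx,Qy)\,{\rm d}y.
\]

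So the two finite measures $p_t(x,y)\,{\rm d}y$ and $p_t(Qx,Qy)\,{\rm d}y$ agree on every Borel set $A$. Both densities are non-negative and integrable (each total mass is at most $1$). By the Radon--Nikodym uniqueness theorem, the densities coincide for $m$-a.e.\ $y$. One detail to check is that $y\mapsto p_t(Qx,Qy)$ is Borel measurable; this holds because it is the composition of the measurable map $y\mapsto p_t(Qx,y)$ with the continuous map $y\mapsto Qy$. This proves \eqref{eq:rot-heat}.

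For the second assertion, assume $y\mapsto p_t(x,y)$ is continuous for each fixed $x$. Then both $y\mapsto p_t(x,y)$ and $y\mapsto p_t(Qx,Qy)$ are continuous. Their difference is a continuous function vanishing Lebesgue-a.e. Its nonzero set is open and Lebesgue-null, hence empty, since every nonempty open set has positive Lebesgue measure. So the equality holds for every $y$.

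The main point to watch is using the everywhere version of \eqref{eq:rot-inv} rather than the version valid only off a properly exceptional set. Here that is supplied by the strong Feller part of Lemma \ref{lem:rot-inv}. Otherwise the argument is routine.
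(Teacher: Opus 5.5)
Your proposal is correct and follows the paper's proof: you compute $P_{Qx}(X_t\in QA)=\int_A p_t(Qx,Qy)\,{\rm d}y$ by the change of variables $z=Qy$, compare it with $P_x(X_t\in A)=\int_A p_t(x,y)\,{\rm d}y$ via the everywhere version of Lemma \ref{lem:rot-inv}, and then use continuity for the pointwise claim. Your extra details (Radon--Nikodym uniqueness, Borel measurability of $y\mapsto p_t(Qx,Qy)$, and the fact that an open Lebesgue-null set is empty) make explicit what the paper leaves implicit.
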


\begin{proof}
For any $x\in {\mathbb R}^d$, $t>0$ and $A\in {\cal B}({\mathbb R}^d)$, 
we have by the change of variables formula with $z=Qy$,
\[
P_{Qx}(X_t\in QA)=\int_{QA}p_t(Qx,z)\,{\rm d}z=\int_Ap_t(Qx,Qy)\,{\rm d}y.
\]
Since Lemma {\rm \ref{lem:rot-inv}} also yields 
\[
P_{Qx}(X_t\in QA)=P_x(X_t\in A)=\int_A p_t(x,y)\,{\rm d}y,
\]
we have \eqref{eq:rot-heat}. 
This implies the rest of the assertion.
\end{proof}

In general, if ${\mathbf M}_1$ and ${\mathbf M}_2$ are symmetric Hunt processes on ${\mathbb R}^d$ associated 
with a common regular Dirichlet form $({\cal E},{\cal F})$ on $L^2({\mathbb R}^d;{\rm d}x)$, 
then their transition functions coincide outside a common properly exceptional set 
(\cite[Theorem 4.2.8]{FOT11}). 
On the other hand, for a class of regular Dirichlet forms, 
there exist Feller process versions of associated symmetric Hunt processes as in \eqref{eq:realization}. 
Under this setting, if we further assume that the regular Dirichlet form is rotation invariant,  
then Lemmas \ref{lem:rotation} and \ref{lem:rot-heat} imply that for any $Q\in O(d)$, 
$\{P_x\}_{x\in {\mathbb R}^d}$ and $\{P_x^Q\}_{x\in {\mathbb R}^d}$ 
coincide as a family of probability measures on $(\Omega,{\cal F})$.

Let us present examples of rotation invariant regular Dirichlet forms on $L^2({\mathbb R}^d;{\rm d}x)$ 
such that we have a Feller process version of associated symmetric Hunt processes. 

\begin{exam}\label{exam:diffusion}\rm 
Let $({\cal E},{\cal F})$ be a regular Dirichlet form on $L^2({\mathbb R}^d;{\rm d}x)$ as in \eqref{eq:form-diffusion}. 
Let ${\mathbf M}=(\{X_t\}_{t\ge 0},\{P_x\}_{x\in {\mathbb R}^d})$ be an associated symmetric diffusion process on ${\mathbb R}^d$ 
of the form \eqref{eq:realization}, which has the Feller and strong Feller properties. 
Assume that each $a_{ij}(x)$ is rotation invariant, that is, $a_{ij}(Qx)=a_{ij}(x)$ for any $Q\in O(d)$. 
Then $({\cal E},{\cal F})$ satisfies the rotation invariance property. 
In particular, Lemmas \ref{lem:rotation} and \ref{lem:rot-heat} imply that for any $Q\in O(d)$, 
$\{P_x\}_{x\in {\mathbb R}^d}$ and $\{P_x^Q\}_{x\in {\mathbb R}^d}$ 
coincide as a family of probability measures on $(\Omega,{\cal F})$.
\end{exam}

\begin{exam}\label{exam:stable-like}\rm 
Let $({\cal E},{\cal F})$ be a regular Dirichlet form on $L^2({\mathbb R}^d;{\rm d}x)$ as in \eqref{eq:form-stable-like}. 
Let ${\mathbf M}=(\{X_t\}_{t\ge 0},\{P_x\}_{x\in {\mathbb R}^d})$ be an associated symmetric jump process on ${\mathbb R}^d$ 
of the form \eqref{eq:realization}, which has the Feller and strong Feller properties. 
If $c(x,y)$ is rotation invariant, that is, $c(Qx,Qy)=c(x,y)$ for any $Q\in O(d)$, 
then $({\cal E},{\cal F})$ satisfies the rotation invariance property. 
In particular, Lemmas \ref{lem:rotation} and \ref{lem:rot-heat} imply that for any $Q\in O(d)$, 
$\{P_x\}_{x\in {\mathbb R}^d}$ and $\{P_x^Q\}_{x\in {\mathbb R}^d}$ 
coincide as a family of probability measures on $(\Omega,{\cal F})$.
\end{exam}

\subsection{Distributions of positive continuous additive functionals under rotation invariance}

For a Borel measure $\mu$ on ${\mathbb R}^d$ and $Q\in O(d)$, we define the measure $\mu^Q$ on ${\cal B}({\mathbb R}^d)$ by 
\[
\mu^Q(A)=\mu(Q^{-1}A), \quad A\in {\cal B}({\mathbb R}^d).
\]
We then have 
\begin{prop}\label{prop:rot-pcaf}
Assume that $({\cal E},{\cal F})$ is rotation invariant 
and an associated symmetric Hunt process ${\mathbf M}$ is a Feller process. 
Then for any $\mu\in S_{00}$ and $Q\in O(d)$, $\mu^Q\in S_{00}$. 
Moreover, $P_x(A_t^{\mu}\le r)=P_{Qx}(A_t^{\mu^Q}\le r) \ (r\ge 0)$ for any $t>0$ and $x\in {\mathbb R}^d$. 
\end{prop}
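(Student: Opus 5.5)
We first show $\mu^Q\in S_{00}$. Since $Q$ is a bijection, $\mu^Q({\mathbb R}^d)=\mu({\mathbb R}^d)<\infty$. For the potentials, Lemma \ref{lem:rot-heat} gives $p_t(x,y)=p_t(Qx,Qy)$ for $m$-a.e.\ $y$. Integrating in $t$ against $e^{-t}$ then yields $G_1(Qx,Qy)=G_1(x,y)$ for all $x$ and $m$-a.e.\ $y$. To upgrade this to every $y$, we use that $G_1(x,\cdot)$ is $1$-excessive in $y$, so it is determined by its $m$-a.e.\ values through $G_1(x,y)=\lim_{s\to 0}e^{-s}\int p_s(y,z)G_1(x,z)\,{\rm d}z$. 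Rotation invariance of $p_s$ and of Lebesgue measure then carries the identity to all $y$. Hence
\[
G_1\mu^Q(Qx)=\int G_1(Qx,Qy)\,\mu({\rm d}y)=G_1\mu(x),
\]
and similarly $\iint G_1\,{\rm d}\mu^Q{\rm d}\mu^Q=\iint G_1\,{\rm d}\mu{\rm d}\mu<\infty$. This gives $\mu^Q\in S_{00}$.

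Next we identify the PCAF of $\mu^Q$. Define $B_t(\omega)=A_t^{\mu}(Q^{-1}\omega)$. Because $Q$ commutes with the shifts $\theta_t$ and preserves the filtration, $B$ is a PCAF in the strict sense of ${\mathbf M}$. Here we use that $P_x=P_x^Q$ on $\mathcal F$ for all $x$, by Lemmas \ref{lem:rotation} and \ref{lem:rot-heat} together with Examples \ref{exam:diffusion}/\ref{exam:stable-like}. Its defining set is the $Q$-image of that of $A^{\mu}$, and it remains admissible because the exceptional sets are $P_x$-null for every $x$ (the process is strong Feller) and $P_x=P_x^Q$. We then compute the Revuz measure of $B$ through the $1$-potential:
\[
E_{Qx}\Bigl[\int_0^\infty e^{-t}{\rm d}B_t\Bigr]=E^{Q}_{x}\Bigl[\int_0^\infty e^{-t}{\rm d}A^\mu_t\Bigr]=E_x\Bigl[\int_0^\infty e^{-t}{\rm d}A^\mu_t\Bigr]=G_1\mu(x)=G_1\mu^Q(Qx).
\]
In the first equality we change variables $\omega\mapsto Q\omega$ and use $P_{Qx}(Q\Lambda)=P^Q_x(\Lambda)$. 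The second uses $P^Q_x=P_x$, and the third is \cite[Theorem 5.1.6]{FOT11}. So $B$ and $A^{\mu^Q}$ have the same $1$-potential, which is bounded, and by uniqueness in the Revuz correspondence (\cite[Theorem 5.1.6]{FOT11}, uniqueness of PCAFs with the same finite potential) $B=A^{\mu^Q}$ $P_y$-a.s.\ for all $y$.

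Finally,
\[
P_{Qx}(A_t^{\mu^Q}\le r)=P_{Qx}(A^\mu_t\circ Q^{-1}\le r)=P_{Qx}(Q\{A^\mu_t\le r\})=P^Q_x(A^\mu_t\le r)=P_x(A^\mu_t\le r).
\]

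The main obstacle is the middle step: checking carefully that $A^\mu\circ Q^{-1}$ is a genuine PCAF in the strict sense with respect to the completed filtration, and that the potential identity holds for every $x$ rather than only quasi-everywhere. We will handle this by using that all quantities are defined pathwise on the canonical space and that $Q$ acts measurably and bijectively on $\Omega$, preserving each ${\cal F}_t^0$ and the completions since $P_x^Q=P_x$.
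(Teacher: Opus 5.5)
Your proposal is correct and follows essentially the same route as the paper. Both arguments transport $A^{\mu}$ to the PCAF $A^{\mu}_t(Q^{-1}\omega)$ with defining set $Q\Lambda$, show its $1$-potential at $Qx$ equals $G_1\mu(x)=G_1\mu^Q(Qx)$ using $P_x=P_x^Q$, identify it with $A^{\mu^Q}$ via the uniqueness in \cite[Theorem 5.1.6]{FOT11}, and conclude with $P_{Qx}(Q\{A^\mu_t\le r\})=P_x(A^\mu_t\le r)$. Your excessiveness argument upgrading $G_1(Qx,Qy)=G_1(x,y)$ from $m$-a.e.\ $y$ to every $y$ is a welcome detail that the paper only asserts implicitly.
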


\begin{proof}
We realize ${\mathbf M}$ as in \eqref{eq:realization}. 
Let $Q\in O(d)$ and $\mu\in S_{00}$. 
Then $\mu^Q\in S_{00}$ by the rotation invariance of $({\cal E},{\cal F})$ and $G_1(x,y)$. 
If $\Lambda\subset \Omega$ denotes the defining set of $\{A_t^{\mu}\}_{t\ge 0}\in {\mathbf A}_{c,1}^+$, 
then Lemmas \ref{lem:rotation} and \ref{lem:rot-heat} yield 
$P_x^Q(\Lambda)=P_{Qx}(Q\Lambda)=P_x(\Lambda)=1$ for any $x\in {\mathbb R}^d$. 
This also shows that $P_x(Q\Lambda)=1$ for any $x\in {\mathbb R}^d$.  
Therefore, if we define 
\[
A_t^{\mu,Q}(\omega)=A_t^{\mu}(Q^{-1}\omega), \quad \omega\in Q\Lambda, \ t\ge 0,
\]
then $\{A_t^{\mu,Q}\}_{t\ge 0}$ is a positive continuous additive functional in the strict sense 
with the defining set $Q\Lambda$. 
For any $r\ge 0$, since 
\[
Q\{\omega\in \Lambda \mid A_t^{\mu}(\omega)\le r\}=\{\omega\in Q\Lambda \mid A_t^{\mu,Q}(\omega)\le r\},
\]
we have by Lemma \ref{lem:rotation},
\begin{equation}\label{eq:dist}
\begin{split}
P_x(A_t^{\mu}\le r)
=P_{Qx}(Q\{\omega\in \Lambda \mid A_t^{\mu}(\omega)\le r\})
&=P_{Qx}(\{\omega\in Q\Lambda \mid A_t^{\mu,Q}(\omega)\le r\})\\
&=P_{Qx}(A_t^{\mu,Q}\le r).
\end{split}
\end{equation}

On the other hand,  Lemma \ref{lem:rotation} also implies that, 
for any ${\cal F}$-measurable function $F: \Omega\to [0,\infty]$, 
$E_x[F]=E_{Qx}[F(Q^{-1}\cdot)] \ (x\in {\mathbb R}^d)$. 
Hence if we take $F=\int_0^{\infty}e^{-t}\,{\rm d}A_t^{\mu}$, 
then 
\[
E_{Qx}\left[\int_0^{\infty}e^{-t}\,{\rm d}A_t^{\mu,Q}\right]
=E_x\left[\int_0^{\infty}e^{-t}\,{\rm d}A_t^{\mu}\right]
=G_1\mu(x), \quad x\in {\mathbb R}^d.
\]
Since Lemma \ref{lem:rot-heat} yields
\begin{equation*}
\begin{split}
E_{Qx}\left[\int_0^{\infty}e^{-t}\,{\rm d}A_t^{\mu^Q}\right]
&=\int_{{\mathbb R}^d}G_1(Qx,y)\,\mu^Q({\rm d}y)
=\int_{{\mathbb R}^d}G_1(Qx,Qy)\,\mu({\rm d}y)\\
&=\int_{{\mathbb R}^d}G_1(x,y)\,\mu({\rm d}y)=G_1\mu(x), \quad x\in {\mathbb R}^d,
\end{split}
\end{equation*}
we obtain
\[
E_{Qx}\left[\int_0^{\infty}e^{-t}\,{\rm d}A_t^{\mu,Q}\right]
=E_{Qx}\left[\int_0^{\infty}e^{-t}\,{\rm d}A_t^{\mu^Q}\right], \quad x\in {\mathbb R}^d.
\]
Then by \cite[Theorem 5.1.6]{FOT11}, 
there exists a set $\Lambda_0\subset \Omega$ with $P_x(\Lambda_0)=1 \ (x\in {\mathbb R}^d)$ 
such that for any $t\ge 0$ and $\omega\in \Lambda_0$, $A_t^{\mu,Q}(\omega)=A_t^{\mu^Q}(\omega)$. 
Combining this with \eqref{eq:dist}, we arrive at the desired assertion. 
\end{proof}

For $\mu\in S_1$, set $g^{\mu}(x)=E_x[e^{-A_{\infty}^{\mu}}] \ (x\in {\mathbb R}^d)$ as in \eqref{eq:gauge}.   
We then have
\begin{prop}\label{prop:rot-pcaf-1}
Assume that $({\cal E},{\cal F})$ is rotation invariant 
and an associated symmetric Hunt process ${\mathbf M}$ is a Feller process as in \eqref{eq:realization}.
Let $Q\in O(d)$ and $\mu\in S_1$. 
\begin{enumerate}
\item[{\rm (i)}] For any bounded continuous function $f$ on $[0,\infty)$, 
\[
E_{Qx}[f(A_t^{\mu^Q})]=E_x[f(A_t^{\mu})], \quad x\in {\mathbb R}^d, \ t\ge 0.
\]
In particular, if $\mu^Q=\mu$, the the function $x\mapsto E_x[f(A_t^{\mu})]$ is rotation invariant. 
\item[{\rm (ii)}]
For any $x\in {\mathbb R}^d$, $g^{Q\mu}(Qx)=g^{\mu}(x)$
In particular, if $\mu^Q=\mu$, then the function $g^{\mu}(x)$ is rotation invariant. 
\end{enumerate}
\end{prop}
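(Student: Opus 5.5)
The plan is to reduce everything to the case $\mu\in S_{00}$ already handled in Proposition \ref{prop:rot-pcaf}, and then pass to limits. In (ii), $g^{Q\mu}$ is read as $g^{\mu^Q}$.

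\textbf{Step 1: (i) for $\mu\in S_{00}$.} For $\mu\in S_{00}$, Proposition \ref{prop:rot-pcaf} gives $P_x(A_t^{\mu}\le r)=P_{Qx}(A_t^{\mu^Q}\le r)$ for all $r\ge 0$. So the law of $A_t^{\mu}$ under $P_x$ equals the law of $A_t^{\mu^Q}$ under $P_{Qx}$. Hence $E_{Qx}[f(A_t^{\mu^Q})]=E_x[f(A_t^{\mu})]$ for every bounded continuous $f$; in fact for every bounded Borel $f$. The case $t=0$ is trivial.

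\textbf{Step 2: (i) for general $\mu\in S_1$.} Take the Borel sets $B_n\uparrow E$ from the definition of $S_1$, with $\mu_n={\bf 1}_{B_n}\cdot\mu\in S_{00}$. The key observation is that $\mu^Q\in S_1$ with the sets $QB_n$, and $(\mu_n)^Q={\bf 1}_{QB_n}\cdot\mu^Q$. The hitting condition $P_x(\lim_n\sigma_{E\setminus QB_n}\ge\zeta)=1$ transfers by Lemma \ref{lem:rotation}, since $\sigma_{E\setminus QB_n}(Q\omega)=\sigma_{E\setminus B_n}(\omega)$. The standard construction of the PCAF of an $S_1$ measure (\cite[Theorem 5.1.7]{FOT11}) gives $A_t^{\mu_n}=\int_0^t{\bf 1}_{B_n}(X_s)\,{\rm d}A_s^{\mu}\uparrow A_t^{\mu}$ for $t<\zeta$, $P_x$-a.s. for all $x$. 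The same holds for $\mu^Q$ with $QB_n$. Step 1 applied to $\mu_n$ gives equality in law of $A_t^{\mu_n}$ under $P_x$ and $A_t^{\mu_n^Q}$ under $P_{Qx}$. Monotone a.s. limits preserve equality in law, so the laws of $A_t^{\mu}$ and $A_t^{\mu^Q}$ agree as $[0,\infty]$-valued variables, and (i) follows. If $\mu^Q=\mu$, (i) directly gives rotation invariance of $x\mapsto E_x[f(A_t^\mu)]$.

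\textbf{Step 3: (ii).} Apply (i) with $f(s)=e^{-s}$ to get $E_{Qx}[e^{-A_t^{\mu^Q}}]=E_x[e^{-A_t^{\mu}}]$. Letting $t\to\infty$, with $A_t^\mu\uparrow A_\zeta^\mu=A^\mu_\infty$ since $A$ is constant after $\zeta$, bounded convergence yields $g^{\mu^Q}(Qx)=g^{\mu}(x)$. When $\mu^Q=\mu$, this is rotation invariance of $g^\mu$.

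\textbf{Main obstacle.} The delicate point is Step 2. One must verify that the approximating PCAFs $A^{\mu_n}$ increase to $A^\mu$ simultaneously for all starting points; this uses the strict-sense construction and the uniqueness in \cite[Theorem 5.1.6]{FOT11}. One must also check that $\mu^Q$ is again in $S_1$ via the rotated sets $QB_n$. Both checks rest on the identity $P_x(\Lambda)=P_{Qx}(Q\Lambda)$ from Lemma \ref{lem:rotation}.
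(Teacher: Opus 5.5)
Your proposal is correct and follows essentially the same route as the paper. It approximates $\mu\in S_1$ by measures in $S_{00}$, applies Proposition \ref{prop:rot-pcaf}, passes to the monotone limit, and then takes $f(s)=e^{-s}$ and lets $t\to\infty$ for (ii). The differences are minor: you use the sets $B_n$ from the definition of $S_1$, pass to the limit through distribution functions, and explicitly check that $\mu^Q\in S_1$ via the sets $QB_n$, which the paper leaves implicit; the paper instead uses the finely open sets $O_n$ from Theorem 5.1.7 of Fukushima--Oshima--Takeda and the pathwise identity $A_t^{\mu^Q}(\omega)=A_t^{\mu}(Q^{-1}\omega)$ with dominated convergence.
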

\begin{proof}
Let $Q\in O(d)$ and  $\mu\in S_1$. 
We first prove (i). By \cite[Theorem 5.1.7]{FOT11}, there exists a sequence $\{O_n\}$ of Borel finely open sets 
with $O_n\nearrow E$ such that the measure $\mu_n:={\bf 1}_{O_n}\cdot \mu$ belongs to $S_{00}$ for any $n\ge 1$. 
Hence if $f$ is  a bounded continuous function on $[0,\infty)$, 
then by Proposition \ref{prop:rot-pcaf}, 
\begin{equation}\label{eq:mu-approx}
E_{Qx}[f(A_t^{\mu_n^Q})]=E_x[f(A_t^{\mu_n})], \quad x\in {\mathbb R}^d, \ t\ge 0.
\end{equation}
We here note that as $n\rightarrow\infty$, 
\[
A_t^{\mu_n}(\omega)=\int_0^t {\bf 1}_{O_n}(X_s(\omega))\,{\rm d}A_s^{\mu}(\omega) \nearrow A_t^{\mu}(\omega), \quad \text{$P_x$-a.s.\ $\omega\in \Omega$}
\]
and
\begin{equation*}
\begin{split}
A_t^{\mu_n^Q}(\omega)=A_t^{\mu_n,Q}(\omega)=A_t^{\mu_n}(Q^{-1}\omega)
&=\int_0^t {\bf 1}_{O_n}(X_s(Q^{-1}\omega))\,{\rm d}A_s^{\mu}(Q^{-1}\omega)\\
&\nearrow A_t^{\mu}(Q^{-1}\omega)=A_t^{\mu^Q}(\omega), \quad \text{$P_x$-a.s.\ $\omega\in \Omega$}.
\end{split}
\end{equation*}
Then by the dominated convergence theorem applied to \eqref{eq:mu-approx}, 
we get (i). 

We next show (ii). By (i) with $f(x)=e^{-x}$, we have 
\[
E_{Qx}[e^{-A_t^{\mu^Q}}]=E_x[e^{-A_t^{\mu}}], \quad x\in {\mathbb R}^d, \ t\ge 0.
\]
Letting $t\rightarrow\infty$, we obtain (ii).
\end{proof}

The next theorem is a direct consequence of \cite[Theorem 1]{M77} 
and the rotation invariance property of measures. 
\begin{thm}\label{thm:pot-decay}{\rm (\cite[Theorem 1]{M77})} 
Let $\mu$ be a positive Radon measure on ${\mathbb R}^d$ such that $\mu^Q=\mu$ for any $Q\in O(d)$. 
If for some $x_0\in {\mathbb R}^d$ and $\alpha\in (1,d)$,   
\begin{equation}\label{eq:pot-finite}
\int_{{\mathbb R}^d}|x_0-y|^{\alpha-d}\,\mu({\rm d}y)<\infty,
\end{equation}
then $\lim_{|x|\rightarrow\infty}\int_{{\mathbb R}^d}|x-y|^{\alpha-d}\,\mu({\rm d}y)=0.$
\end{thm}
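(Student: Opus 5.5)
The plan is to reduce everything, via the rotation invariance of $\mu$, to a one-dimensional estimate for the Riesz potential of the uniform measure on a sphere. First decompose $\mu$ radially. Let $\nu$ be the image of $\mu|_{{\mathbb R}^d\setminus\{0\}}$ under $y\mapsto |y|$, and let $\sigma_s$ be the normalized surface measure on $\{|y|=s\}$. Since $\mu^Q=\mu$ for every $Q\in O(d)$, averaging over the Haar measure of $O(d)$ gives
\[
\mu({\rm d}y)=\mu(\{0\})\,\delta_0({\rm d}y)+\int_{(0,\infty)}\sigma_s({\rm d}y)\,\nu({\rm d}s).
\]
Here $\nu$ is a Radon measure on $(0,\infty)$ that is finite on bounded sets, because $\mu$ is Radon. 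Setting $u_s(x)=\int|x-y|^{\alpha-d}\,\sigma_s({\rm d}y)$, which depends only on $|x|$, Fubini gives
\[
U(x):=\int_{{\mathbb R}^d}|x-y|^{\alpha-d}\,\mu({\rm d}y)=\mu(\{0\})|x|^{\alpha-d}+\int_{(0,\infty)}u_s(x)\,\nu({\rm d}s).
\]
The atom term clearly tends to $0$ as $|x|\to\infty$, since $\alpha<d$.

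The key step is a two-sided estimate: there are constants $0<c\le C$, depending only on $d$ and $\alpha$, with
\[
c\max(|x|,s)^{\alpha-d}\le u_s(x)\le C\max(|x|,s)^{\alpha-d},\qquad x\in{\mathbb R}^d,\ s>0.
\]
By scaling, $u_s(x)=s^{\alpha-d}u_1(x/s)$, so it suffices to treat $s=1$.

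1. The lower bound is immediate from $|x-y|\le |x|+1\le 2\max(|x|,1)$.
2. For the upper bound when $|\,|x|-1|\ge 1/2$, use $|x-y|\ge |\,|x|-1|$, which is comparable to $\max(|x|,1)$ in that regime.
3. The main obstacle is the region $|x|\in(1/2,3/2)$, where $x$ may lie on or near the unit sphere. Here one has to show that $\sup_x\int_{S^{d-1}}|x-y|^{\alpha-d}\,\sigma_1({\rm d}y)<\infty$. In local coordinates on the sphere around the point nearest to $x$, one has $|x-y|\ge c|\xi|$, where $\xi\in{\mathbb R}^{d-1}$ is the tangential coordinate. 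The integral is then dominated by $\int_{|\xi|\le 2}|\xi|^{\alpha-d}\,{\rm d}\xi\asymp\int_0^2 t^{\alpha-d}t^{d-2}\,{\rm d}t$, which is finite exactly because $\alpha>1$. This is the only place where $\alpha>1$ enters.

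Next extract the integrability of $\nu$ from \eqref{eq:pot-finite}. By the lower bound, $U(x_0)<\infty$ yields $\int\max(|x_0|,s)^{\alpha-d}\,\nu({\rm d}s)<\infty$.

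Finally, for $|x|\ge|x_0|$ the upper bound gives $u_s(x)\le C\max(|x|,s)^{\alpha-d}\le C\max(|x_0|,s)^{\alpha-d}$, which is $\nu$-integrable. Moreover, $\max(|x|,s)^{\alpha-d}\to 0$ as $|x|\to\infty$ for each fixed $s$. Dominated convergence then gives $\int u_s(x)\,\nu({\rm d}s)\to 0$, and hence $U(x)\to 0$ as $|x|\to\infty$.
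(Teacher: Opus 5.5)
Your argument is correct, but it takes a genuinely different route from the paper. The paper gives no proof of its own. It quotes Mizuta's Theorem 1 in \cite{M77} on the behaviour of Riesz potentials at infinity and combines it with rotation invariance. The restriction $\alpha>1$ comes from the observation of Hansen--Bogdan \cite[Remark 2.7]{HB25} that the argument needs spheres $\{|x|=r\}$ to have positive capacity. Heuristically, results of Mizuta's type control the potential at infinity only outside a capacitarily small exceptional set; for a radial potential such a set is a union of whole spheres, which is excluded once spheres carry positive capacity.

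You replace this capacity-theoretic input with an elementary, self-contained computation: the radial disintegration $\mu=\mu(\{0\})\delta_0+\int_{(0,\infty)}\sigma_s\,\nu({\rm d}s)$, the uniform two-sided estimate $u_s(x)\asymp\max(|x|,s)^{\alpha-d}$, and dominated convergence. Your upper bound in the region $|x|\in(1/2,3/2)$ is exactly the quantitative form of ``the sphere has positive $\alpha$-capacity'': its normalized surface measure has bounded potential. So $\alpha>1$ enters at the same point in both approaches, but in yours it is explicit and checkable. That step can be made exact via $|x-y|\ge |x/|x|-y|/\sqrt{2}$ for $|x|\ge 1/2$ and $|y|=1$. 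This reduces the supremum to the single integral $\int|e-y|^{\alpha-d}\,\sigma_1({\rm d}y)$ with $e\in S^{d-1}$ fixed.

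Your route also yields more than the limit, namely the comparison $U(x)\asymp\mu(\{0\})|x|^{\alpha-d}+\int\max(|x|,s)^{\alpha-d}\,\nu({\rm d}s)$. This gives finiteness of $U$ at every $x\ne 0$ and an explicit decay rate in terms of $\nu$. The cited route is shorter on the page and places the statement in a general theory that also covers non-symmetric measures (up to an exceptional set). Its cost is reliance on a reference whose stated range of $\alpha$ had to be corrected.
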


\begin{rem}\rm 
Hansen-Bogdan \cite[Remark 2.7]{HB25} pointed out 
that \cite[Theorem 2]{M77} is valid only for $\alpha>1$ 
because the surface of a ball needs to  have positive capacity (see also \cite[Example 2.4]{HB25}). 
This comment also applies to \cite[Theorem 1]{M77}. 
\end{rem}

Combining this with Proposition \ref{prop:rot-pcaf-1}, we obtain 
\begin{cor}\label{cor:pot-decay}
\begin{enumerate}
\item[{\rm (i)}] 
Let $({\cal E},{\cal F})$ be as in Example {\rm \ref{exam:diffusion}} with  $d\ge 3$. 
Let $\mu$ be a positive Radon measure on ${\mathbb R}^d$ such that 
$\mu\in S_1$ and $\mu^Q=\mu$ for any $Q\in O(d)$.
Then
\begin{equation}\label{eq:pot-decay-1}
\lim_{|x|\rightarrow\infty}\int_{{\mathbb R}^d}|x-y|^{2-d}g^{\mu}(y)\,\mu({\rm d}y)=0.
\end{equation}
\item[{\rm (ii)}] 
Let $({\cal E},{\cal F})$ be as in Example {\rm \ref{exam:stable-like}} with $\alpha\in (1,d\wedge 2)$.
Let $\mu$ be a positive Radon measure on ${\mathbb R}^d$ 
such that $\mu\in S_1$ and $\mu^Q=\mu$ for any $Q\in O(d)$. 
Then
\begin{equation}\label{eq:pot-decay-2}
\lim_{|x|\rightarrow\infty}\int_{{\mathbb R}^d}|x-y|^{\alpha-d}g^{\mu}(y)\,\mu({\rm d}y)=0.
\end{equation}
\end{enumerate}
\end{cor}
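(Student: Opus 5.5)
The plan is to apply Theorem \ref{thm:pot-decay} to the weighted measure $\nu({\rm d}y)=g^{\mu}(y)\,\mu({\rm d}y)$, with $\alpha=2$ in case (i) and the given $\alpha\in(1,d\wedge 2)$ in case (ii). Two hypotheses need checking.

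\emph{Step 1: $\nu$ is rotation invariant and Radon.} By Proposition \ref{prop:rot-pcaf-1} (ii), $\mu^Q=\mu$ for all $Q\in O(d)$ implies $g^{\mu}(Qx)=g^{\mu}(x)$. Hence, for a Borel set $A$, the change of variables $y=Qz$ together with $\mu^Q=\mu$ gives
\[
\nu^Q(A)=\int_{Q^{-1}A}g^{\mu}\,{\rm d}\mu=\int_A g^{\mu}(Q^{-1}y)\,\mu^Q({\rm d}y)=\nu(A).
\]
Since $0\le g^{\mu}\le 1$, $\nu$ is a positive Radon measure. The process in Examples \ref{exam:diffusion} and \ref{exam:stable-like} is a Feller process with rotation invariant Dirichlet form, so Proposition \ref{prop:rot-pcaf-1} applies.

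\emph{Step 2: the finiteness condition \eqref{eq:pot-finite} at some point $x_0$.} Using $G(x,y)\ge c_1|x-y|^{\alpha-d}$, it suffices to show $\int G(x_0,y)g^{\mu}(y)\,\mu({\rm d}y)<\infty$. By Proposition \ref{prop:g-eq}, for every $x$,
\[
\int_{{\mathbb R}^d}G(x,y)g^{\mu}(y)\,\mu({\rm d}y)=P_x(A^{\mu}_{\zeta}<\infty)-g^{\mu}(x)\le 1 .
\]
Here ${\mathbf M}$ is conservative, so $\zeta=\infty$ and $A^\mu_\zeta=A^\mu_\infty$. Thus \eqref{eq:pot-finite} holds for $\nu$ at every $x_0$, with bound $1/c_1$. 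If $\nu=0$ the conclusion is trivial.

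\emph{Step 3: conclusion.} Theorem \ref{thm:pot-decay} applies in case (i) since $1<2<d$ when $d\ge3$, and in case (ii) since $1<\alpha<d$. It gives $\lim_{|x|\to\infty}\int|x-y|^{\alpha-d}\,\nu({\rm d}y)=0$, which is \eqref{eq:pot-decay-1} and \eqref{eq:pot-decay-2}.

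The only delicate point is the finiteness in Step 2. I expect the Green function lower bound combined with Proposition \ref{prop:g-eq} to dispose of it at once, so the main care is in confirming that Proposition \ref{prop:rot-pcaf-1} applies to a general $\mu\in S_1$.
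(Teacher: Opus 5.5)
Your proposal is correct and follows the paper's proof: apply Theorem \ref{thm:pot-decay} to $\nu=g^{\mu}\cdot\mu$, getting rotation invariance from Proposition \ref{prop:rot-pcaf-1} (ii) and finiteness of the potential from Proposition \ref{prop:g-eq}. You also spell out two steps the paper uses without comment: the Green function lower bound $G(x,y)\ge c_1|x-y|^{\alpha-d}$, and the identification $A^{\mu}_{\zeta}=A^{\mu}_{\infty}$ that follows from conservativeness.
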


\begin{proof}
Under the setting of (i) or (ii), 
$g^{\mu}$ is rotation invariant by Proposition \ref{prop:rot-pcaf-1} (ii). 
Since $g^{\mu}$ is non-negative and bounded, 
the measure $\nu=g^{\mu}\cdot \mu$ also belongs to $S_1$ and $\nu^Q=\nu$ for any $Q\in O(d)$. 
Moreover, Proposition \ref{prop:g-eq} yields  for any $x\in {\mathbb R}^d$, 
\[
\int_{{\mathbb R}^d}|x-y|^{\alpha-d}\,\nu^Q({\rm d}y)
=\int_{{\mathbb R}^d}|x-y|^{\alpha-d}\,\nu({\rm d}y)
=\int_{{\mathbb R}^d}|x-y|^{\alpha-d}g^{\mu}(y)\,\mu({\rm d}y)<\infty, 
\]
where $\alpha=2$ under the setting of  (i). 
Hence by Theorem \ref{thm:pot-decay}, we have the desired assertion. 
\end{proof}

As an application of Corollary \ref{cor:pot-decay}, 
we verify the condition \eqref{thm:pcaf-div} for  Examples \ref{exam:absolute} and \ref{exam:singular}. 

\begin{rem}\rm 
Let $f$ be a positive Borel measurable function on ${\mathbb R}^d$ such that 
for some positive constants  $c_1$ and $c_2$, $c_1\le f(x)\le c_2 \ (x\in {\mathbb R}^d)$. 
Let $\mu$ be a positive Radon measure on $E$ such that $\mu\in S_1$ and $\mu^Q=\mu$ for any $Q\in O(d)$.  
If we define the measure $\mu_f$ by $\mu_f=f\cdot\mu$, 
then $\mu_f$ is also a positive Radon measure on ${\mathbb R}^d$ and belongs to $S_1$. 
Since 
\[
g^{c_2\mu}(x)\le g^{\mu_f}(x) \le g^{c_1\mu}(x), \quad x\in {\mathbb R}^d,
\]
\eqref{eq:pot-decay-1} and \eqref{eq:pot-decay-2} are valid with $\mu$ replaced by $\mu_f$. 
This shows that, even if $\mu_f$ is rotation variant, 
we can verify the condition \eqref{thm:pcaf-div} for $\mu=\mu_f$. 
In particular,  under the setting of Example \ref{exam:absolute} or Example \ref{exam:singular}, 
we can apply Theorem \ref{thm:liouville} to the Schr\"odinger operators with the potential term $\mu_f$.
\end{rem}

\end{document}